\DeclareMathAlphabet{\mathpzc}{OT1}{pzc}{m}{it}
\setlist[enumerate]{label={\upshape(\arabic*)}}
\DeclareMathOperator{\diam}{diam}
\DeclareMathOperator{\supp}{supp}
\DeclareMathOperator{\card}{card}
\DeclareMathOperator{\fin}{fin}
\DeclareMathOperator{\MCP}{MCP}
\DeclareMathOperator{\VR}{VR}
\DeclareMathOperator{\C}{\check{C}}
\DeclareMathOperator{\id}{id}
\DeclareMathOperator{\Vect}{\normalfont{\textbf{Vect}}}
\DeclareMathOperator{\Top}{\normalfont{\textbf{Top}}}
\newcommand{\gvr}{|{\VR(X;r)}|}
\newcommand{\vu}{\mathcal{V}(\mathscr{U})}
\newcommand{\vm}{\mathcal{V}^m(\mathscr{U})}
\newcommand{\gv}{|\mathcal{V}(\mathscr{U})|}
\newcommand{\mgv}{|\mathcal{V}(\mathscr{U})|_m}
\newcommand{\p}{\partial}
\newcommand{\wt}{\widetilde}
\newcommand{\mci}{\mathcal{I}}
\newcommand{\mcp}{\mathcal{P}}
\newcommand{\mcs}{\mathcal{S}}
\newcommand{\mcv}{\mathcal{V}}
\newcommand{\scra}{\mathscr{A}}
\newcommand{\scrc}{\mathscr{C}}
\newcommand{\scru}{\mathscr{U}}
\newcommand{\bbk}{\mathbb{K}}
\newcommand{\bbn}{\mathbb{N}}
\newcommand{\bbr}{\mathbb{R}}
\newcommand{\bbz}{\mathbb{Z}}
\newtheorem{introthm}{Theorem}
\newtheorem{introcor}[introthm]{Corollary}
\newtheorem{recallthm}{Theorem}
\newtheorem{recallcor}[recallthm]{Corollary}
\newtheorem{theorem}{Theorem}[section]
\newtheorem{lemma}[theorem]{Lemma}
\newtheorem{proposition}[theorem]{Proposition}
\theoremstyle{definition}
\newtheorem{definition}[theorem]{Definition}
\newtheorem{question}[theorem]{Question}
\newtheorem{remark}[theorem]{Remark}
\begin{document}

\title[Vietoris thickenings and complexes are weakly equivalent]{Vietoris thickenings and complexes are weakly homotopy equivalent}

\author[P. Gillespie]{Patrick Gillespie}
\address{University of Tennessee\\ Department of Mathematics\\
Knoxville, TN 37996, USA}
\email{pgilles5@vols.utk.edu}

\subjclass[2010]{55N31; 51F99; 55P10}
\keywords{Vietoris--Rips complex, metric thickening, weak homotopy equivalence}
\date{\today}

\begin{abstract}
Characterizing the homotopy types of the Vietoris--Rips complexes $\VR(X;r)$ of a metric space $X$ is in general a difficult problem. The Vietoris--Rips metric thickening $\VR^m(X;r)$, a metric space analogue of $\VR(X;r)$, was introduced as a potentially more amenable object of study with several advantageous properties, yet the relationship between its homotopy type and that of $\VR(X;r)$ was not fully understood. We show that for any metric space $X$ and threshold $r>0$, the natural bijection $|{\VR(X;r)}|\to \VR^m(X;r)$ between the (open) Vietoris--Rips complex and Vietoris--Rips metric thickening is a weak homotopy equivalence.
\end{abstract}

\maketitle

\section{Introduction}

Persistent homology is one of the central tools of topological data analysis and in recent years has found a number of applications to a diverse range of fields such as neuroscience \cite{Chung, Bendich}, biology and biochemistry \cite{Cang, Maroulas22}, materials science \cite{Xia, Lee, Spannaus}, and the study of sensor networks \cite{DeSilva07}. In order to apply persistent homology to a finite and discrete data set $X$, one must be able to convert $X$ into a filtered simplicial complex for which the persistent homology groups can then be computed. A common method of doing so is through the Vietoris--Rips construction, which associates a simplicial complex $\VR(X;r)$ to a metric space $X$ for each threshold value $r>0$. The simplices of $\VR(X;r)$ are the finite subsets of $X$ with diameter less than $r$, so that as $r$ varies from $0$ to $\diam(X)$, the simplicial complex $\VR(X;r)$ evolves from a discrete set to a fully connected complex---with the intermediate stages containing information relevant to $X$.

Because of the role of Vietoris--Rips complexes in applications of persistent homology, there has been growing interest in understanding the homotopy type of $\gvr$ (the geometric realization of $\VR(X;r)$) for not just a finite discrete set but instead the limiting case of a Riemannian manifold or Euclidean submanifold $X$ for arbitrary threshold value $r>0$ \cite{Circle, Ellipse, Lim}. However, when the Vietoris--Rips complex is considered for a metric space $X$ which is not discrete, the topology of $\gvr$ becomes cumbersome to work with. For one, $\VR(X;r)$ in this case is not locally finite as a simplicial complex, hence $\gvr$ is not metrizable. Even if one equips $\gvr$ with the metric topology for simplicial complexes, its metric has no natural relationship with the metric of $X$.

To remedy some of these shortcomings, the Vietoris--Rips metric thickening $\VR^m(X;r)$ was introduced by Adamaszek, Adams, and Frick in \cite{Metric}. The underlying set of $\VR^m(X;r)$ is the same as $\gvr$, yet $\VR^m(X;r)$ is equipped with a metric which makes the natural function $X\to \VR^m(X;r)$ an isometric embedding. Similar to Hausmann's theorem \cite{Hausmann} which provides the existence of a homotopy equivalence $\gvr\to X$ for sufficiently small values of $r$ when $X$ is a compact Riemannian manifold, it was shown in \cite{Metric} that for a compact Riemannian manifold $X$ and $r$ sufficiently small, there exists a homotopy equivalence $\VR^m(X;r)\to X$. Moreover, the homotopy equivalence $\VR^m(X;r)\to X$ is naturally defined in terms of Fr\'{e}chet means (which are unique in this case due to the choice of sufficiently small $r$) and has the embedding $X\to \VR^m(X;r)$ as its homotopy inverse, whereas the homotopy equivalence of Hausmann's theorem is highly non-canonical in the sense that it is defined in terms of a total ordering of $X$.

The relationship between $\gvr$ and $\VR^m(X;r)$ was studied in further detail in \cite{Adams, Adams2}. Adams, Frick, and Virk showed in \cite{Adams} that $\gvr$ and $\VR^m(X;r)$ have isomorphic homotopy groups for any separable metric space $X$ and arbitrary $r>0$. More precisely, it was shown that the Vietoris complex $\vu$ and Vietoris metric thickening $\vm$, which generalize $\VR(X;r)$ and $\VR^m(X;r)$ respectively, have isomorphic homotopy groups whenever $\scru$ is a uniformly bounded open cover of a separable metric space $X$. Their proof relies on comparing the nerve of a particular cover of $\vm$ with the nerve of $\scru$, the latter of which is homotopy equivalent to $\gv$ by Dowker duality \cite{Dowker}. Critically, this proof does not produce a map between $\gv$ and $\vm$ which induces the isomorphisms of homotopy groups. 

Ultimately, one would like to know when $\gv$ and $\vm$ have the same homotopy type, as this could allow the homotopy type of $\gv$ to be studied through metric techniques applied to $\vm$. As a step in this direction, our main result is to address Question 7.2 of \cite{Adams} and show that the natural bijection $\gv\to \vm$ is a weak homotopy equivalence.

\begin{introthm}\label{mainthm}
For any uniformly bounded open cover $\scru$ of a metric space $X$, the natural bijection $\id:\gv\to \mcv^m(\scru)$ is a weak homotopy equivalence.
\end{introthm}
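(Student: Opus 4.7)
The plan is to reduce the theorem to a single \emph{relative approximation lemma}: every continuous map $f : K \to \vm$ from a compact polyhedron $K$, whose restriction to a subpolyhedron $L \subseteq K$ already factors through $\gv$ via the identity, is homotopic rel $L$ to a map whose image lies in the image of $\id : \gv \to \vm$ and is continuous when regarded as a map into $\gv$. Applying this lemma to the pairs $(S^n, \emptyset)$ and $(D^{n+1}, S^n)$ gives, respectively, surjectivity and injectivity of $\id_{*}$ on $\pi_{n}$ for every $n$, hence the theorem.

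\textbf{Key construction.} To prove the lemma, I would triangulate $K$ very finely (relative to $L$) and argue by induction on the skeleta. For each vertex $v$ of the triangulation, $f(v) = \mu_v = \sum_{i} t_{v,i} \delta_{x_{v,i}}$ is a finitely supported measure. On each simplex $\tau = [v_{0}, \ldots, v_{d}]$ of the triangulation I would define the candidate approximant by
\[
g\bigl(s_{0} v_{0} + \cdots + s_{d} v_{d}\bigr) \;:=\; s_{0} \mu_{v_{0}} + \cdots + s_{d} \mu_{v_{d}},
\]
the simplicial approximation of $f$ expressed through the barycentric correspondence. Provided that $\bigcup_{i} \supp(\mu_{v_{i}})$ lies in a common $U_{\tau} \in \scru$ and that $f(\tau)$ consists entirely of measures supported in $U_{\tau}$, both $g$ and the linear interpolation $(1-t) f(x) + t\, g(x)$ remain in $\vm$; moreover, $g$ is piecewise affine between measures on the triangulation of $K$, and so is continuous when viewed as a map into $\gv$. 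Assembling these local constructions compatibly across simplices yields the required homotopy rel $L$.

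\textbf{Main obstacle.} The delicate point, which I expect to occupy most of the work, is choosing the triangulation of $K$ and the assignment $\tau \mapsto U_{\tau}$ so that the support condition above holds simultaneously on every simplex. A naive attempt is to pull back the set-theoretic cover $\vm = \bigcup_{U \in \scru} \{\mu \in \vm : \supp(\mu) \subseteq U\}$ under $f$ and subdivide accordingly; however, the sets $\{\mu \in \vm : \supp(\mu) \subseteq U\}$ are \emph{not} open in the Wasserstein-type metric on $\vm$, since a measure close to $\mu$ may carry a small atom just outside $U$. Consequently one cannot directly invoke compactness of $K$ and a Lebesgue-number argument. Overcoming this---likely by thickening each $U$ to an open neighborhood, using the uniform boundedness of $\scru$ to ensure that the combinatorics of the resulting open cover still match the nerve structure of $\vu$, and then perturbing the naive approximant $g$ so that stray atoms are pushed back into the prescribed $U_{\tau}$ at small Wasserstein cost---is the technical heart of the argument. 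The role of uniform boundedness should be precisely to keep these perturbations small and mutually consistent across neighboring simplices of the triangulation of $K$.
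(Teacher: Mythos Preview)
Your plan is essentially the paper's own strategy: reduce to a relative approximation statement (the paper's \Cref{lifting}), build the approximant by affine interpolation of the vertex measures (the paper's \Cref{second}), and confront the non-openness of the sets $M_U=\{\mu:\supp(\mu)\subseteq U\}$ by first thickening to an open cover (the paper's $\wt M_\scru$, \Cref{cover}) and then ``pushing stray atoms back'' (the paper's pump map $\Phi$, \Cref{pump}, applied simplex-by-simplex in \Cref{deform} and \Cref{injpart1}). So the architecture is right.

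Where your account has a genuine gap is in the quantitative interplay between the thickening parameter and the triangulation. The open sets $\wt M_U$ only satisfy $\MCP(p,U)$ for a fixed $p<1$; to conclude that an intersection $\bigcap_{i\le m}\wt M_{U_i}$ still concentrates mass in $\bigcap_i U_i$ (so that the pump map is defined there), you need $m(1-p)<1$, i.e.\ an \emph{a priori} bound $m\le N$ with $p>1-1/N$. In the inductive push over skeleta, the relevant $m$ is the number of top-dimensional simplices of the triangulation of $K$ that share a given face. If you refine by iterated barycentric subdivision, this valence grows without bound as the mesh shrinks, so you cannot fix $p$ (hence $\wt M_\scru$, hence the Lebesgue number, hence the mesh) before knowing how fine the triangulation must be: the argument becomes circular. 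The paper breaks this circularity with the Freudenthal--Kuhn triangulation of $I^n$ (\Cref{tri}), which furnishes arbitrarily fine triangulations whose vertex valence is bounded by $\alpha(n)=2^n n!$ independently of the mesh; one then chooses $p>1-1/\alpha(n)$ once and for all. Your proposal flags ``choosing the triangulation'' as the delicate point but does not identify this valence constraint, and it mis-locates the role of uniform boundedness: in the paper, uniform boundedness is used to make $\id:\mgv\to\vm$ continuous (\Cref{incl}) and to guarantee finite diameter of each $U$ in the pump lemma, not to control consistency across simplices.
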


Since the Vietoris--Rips complex $\VR(X;r)$ and \v Cech complex $\C(X;r)$ are examples of Vietoris complexes $\vu$ taken with respect to particular open covers (see \Cref{Vietoris-def}), \Cref{mainthm} implies that the natural bijections $|{\VR(X;r)}|\to \VR^m(X;r)$ and $|{\C(X;r)}|\to {\C}{}^m(X;r)$ are weak homotopy equivalences.

\Cref{mainthm} can be seen as a significant step towards a proof that $\gv$ and $\vm$ are homotopy equivalent. Whitehead's theorem states that a weak homotopy equivalence $f:X\to Y$ is a homotopy equivalence if $X$ and $Y$ have the homotopy types of CW complexes. Thus, in light of \Cref{mainthm}, to show that $\gv$ and $\vm$ are homotopy equivalent, it suffices to show that $\vm$ has the homotopy type of a CW complex.

The authors of \cite{Adams} asked whether $\gv$ and $\vm$ have isomorphic homology groups. This was shown to be true in \cite{Gill} through an argument involving the Mayer--Vietoris spectral sequence anticipated by the authors of \cite{Adams}. However this fact is also a direct consequence of \Cref{mainthm}, as a weak homotopy equivalence induces an isomorphism of (co)homology groups.

\begin{introcor}\label{maincor}
For any uniformly bounded open cover $\scru$ of a metric space $X$, the natural bijection $\id:\gv\to \mcv^m(\scru)$ induces isomorphisms of (co)homology groups.
\end{introcor}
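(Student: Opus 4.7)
The plan is to derive \Cref{maincor} as an immediate consequence of \Cref{mainthm}. The theorem provides that the natural bijection $\id:\gv\to\vm$ is a weak homotopy equivalence, so it suffices to invoke the standard fact that any weak homotopy equivalence between topological spaces induces isomorphisms on all singular (co)homology groups with arbitrary coefficients.

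To spell this out, I would cite (or briefly recall) that for any continuous map $f:Y\to Z$ of topological spaces inducing isomorphisms on all homotopy groups (for every choice of basepoint), the induced maps $f_{*}:H_n(Y;A)\to H_n(Z;A)$ and $f^{*}:H^n(Z;A)\to H^n(Y;A)$ are isomorphisms for every abelian group $A$ and every $n\ge 0$. A clean way to see this is via CW approximation: choose CW approximations $\Gamma Y\to Y$ and $\Gamma Z\to Z$, lift $f$ through these approximations to a cellular map $\Gamma f:\Gamma Y\to\Gamma Z$, apply Whitehead's theorem to conclude that $\Gamma f$ is a homotopy equivalence (and hence induces isomorphisms on (co)homology), and recall that the CW approximations themselves induce isomorphisms on singular (co)homology. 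The cohomology statement then follows either by the same argument or via naturality of the universal coefficient theorem.

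There is essentially no obstacle specific to the corollary, since all of the geometric content has been concentrated in \Cref{mainthm}. The deduction of \Cref{maincor} is purely a matter of applying standard machinery from algebraic topology, and can be dispatched in a single sentence pointing to a textbook reference (for instance, the combination of Propositions 4.21 and Theorem 4.5 in Hatcher, or equivalently the classical statement that singular (co)homology factors through the weak homotopy category).
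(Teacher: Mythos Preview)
Your proposal is correct and matches the paper's own treatment: the paper likewise deduces \Cref{maincor} immediately from \Cref{mainthm} by citing the standard fact that a weak homotopy equivalence induces isomorphisms on (co)homology (with an explicit pointer to \cite[Proposition 4.21]{Hatcher}). Your additional sketch via CW approximation is a fine elaboration of that cited fact but is not needed beyond what the paper does.
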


The only difference between \Cref{maincor} and the result in \cite{Gill} is that the isomorphisms between the homology groups of $\gv$ and $\vm$ are now induced by $\id:\gv\to \mcv^m(\scru)$. However, this has new implications for the relationship between the persistent homology of $\VR(X;\bullet)$ and $\VR^m(X;\bullet)$, as well as the persistent homology of the \v{C}ech complex $\C(X;\bullet)$ and \v{C}ech metric thickening ${\C}{}^m(X;\bullet)$, which we elaborate on in \Cref{persistence}.

\begin{introcor}\label{persistence_cor}
For any metric space $X$, the persistence modules $H_n\circ|{\VR(X;\bullet)}|$ and $H_n\circ \VR^m(X;\bullet)$ (respectively, $H_n\circ|{\C(X;\bullet)}|$ and $H_n\circ{\C}{}^m(X;\bullet)$) are isomorphic.
\end{introcor}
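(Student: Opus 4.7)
The plan is to derive the persistence-module isomorphism directly from \Cref{maincor} by promoting its pointwise content to a natural isomorphism in the scale parameter. For each fixed $r>0$, \Cref{maincor}, applied to the uniformly bounded open cover of $X$ realizing $\VR(X;r)$ as a Vietoris complex $\vu$ (as noted immediately after \Cref{mainthm}), yields an isomorphism $\id_* : H_n(\gvr) \xrightarrow{\cong} H_n(\VR^m(X;r))$. A persistence module is a functor from the poset $(0,\infty)$ to abelian groups, so to assemble these pointwise isomorphisms into an isomorphism of persistence modules it is enough to verify that for all $r \le r'$ the square
\[
\begin{CD}
\gvr @>>> |\VR(X;r')| \\
@V{\id}VV @V{\id}VV \\
\VR^m(X;r) @>>> \VR^m(X;r')
\end{CD}
\]
commutes, where the horizontal arrows are the inclusions defining the persistence structure on each side.

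This square commutes strictly, essentially by definition. Both $\gvr$ and $\VR^m(X;r)$ share the same underlying set, namely the formal convex combinations of finite subsets of $X$ of diameter less than $r$; the horizontal maps are the literal set-theoretic inclusions coming from the implication ``diameter $<r$ implies diameter $<r'$''; and the vertical maps act as the identity on the underlying set. Applying $H_n$ thus produces a commuting square of abelian groups whose vertical arrows are the isomorphisms supplied by \Cref{maincor}, which is precisely the data of a natural isomorphism of persistence modules. The \v{C}ech statement is handled by the same argument verbatim, since $\C(X;r)$ and ${\C}{}^m(X;r)$ likewise arise as $\vu$ and $\vm$ for an appropriate open cover. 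The substantive content is encapsulated in \Cref{maincor}; the naturality verification is formal, so I do not anticipate any genuine obstacle beyond it.
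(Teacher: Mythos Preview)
Your proposal is correct and follows essentially the same approach as the paper: both observe that the identity bijections assemble into a strictly commuting ladder over the scale parameter, so the pointwise homology isomorphisms coming from \Cref{mainthm}/\Cref{maincor} constitute a natural isomorphism of persistence modules. The only cosmetic difference is that the paper cites \Cref{mainthm} together with the fact that weak homotopy equivalences induce homology isomorphisms, whereas you invoke \Cref{maincor} directly.
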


A similar result was proved through different methods by Adams, Memoli, Moy, and Wang \cite[Corollary 5.10]{Adams2}, who have shown that for a totally bounded metric space $X$, the interleaving distance between the persistence modules $H_n\circ|{\VR(X;\bullet)}|$ and $H_n\circ \VR^m(X;\bullet)$ (respectively, $H_n\circ|{\C(X;\bullet)}|$ and $H_n\circ{\C}{}^m(X;\bullet)$) is zero. While this is enough to conclude that $\VR(X;\bullet)$ and $\VR^m(X;\bullet)$ have identical (undecorated) persistence diagrams, their result does not imply that the persistence modules are isomorphic.

\subsection*{Outline}

The structure of this paper is as follows. In \Cref{notation} we establish notation and recall preliminary results concerning Vietoris metric thickenings. In \Cref{topologies}, we discuss $\mgv$, the Vietoris complex with the metric topology for simplicial complexes and its relationship with $\vm$. In \Cref{deforming}, we prove a technical lemma about the existence of homotopies for certain maps of simplices $\Delta^n\to\vm$. In \Cref{triangulation}, we recall the Freudenthal--Kuhn triangulation of $\bbr^n$, which we use to construct a class of triangulations of $I^n$. In \Cref{equivalence} we prove \Cref{mainthm} by showing that if $f:I^n\to \vm$ is a map such that the restriction $f|_{\p I^n}$ is continuous as a map $f|_{\p I^n}:\p I^n\to \mgv$, then $f$ can be homotoped rel.\ $\p I^n$ to a continuous map $g:I^n\to \mgv$. The main step for constructing such a homotopy involves triangulating $I^n$ using the Freudenthal-Kuhn triangulation, which allows us to then iteratively apply the homotopy of \Cref{deforming}. Finally, in \Cref{persistence}, we describe how \Cref{mainthm} implies results about the persistent homology of $\VR^m(X;\bullet)$ and ${\C}{}^m(X;\bullet)$.

\section{Notation and preliminaries}\label{notation}

We use $I$ to denote the unit interval, and $I^n$ to denote the unit $n$-cube $I^n=[0,1]^n$. Let $\Delta^n$ denote the standard $n$-simplex. We write $\p I^n$ and $\p \Delta^n$ for the boundaries of $I^n$ and $\Delta^n$ respectively. Given $A\subseteq X$, we say that $H:X\times I\to Y$ is a homotopy rel.\ $A$ if $H(a,t)=H(a,0)$ for all $t\in I$ and $a\in A$. If $H, G:X\times I\to Y$ are homotopies such that $H(x,1)=G(x,0)$ for all $x\in X$, the concatenation $H\cdot G:X\times I\to Y$ is the homotopy defined by 
$$(H\cdot G)(x,t)=
\begin{cases}
H(x, 2t) & t\in [0, 1/2]\\
G(x, 2t-1) & t\in [1/2, 1].
\end{cases}$$
If $\scru$ is a cover of $X$ and $f:Y\to X$ is a map, let $f^{-1}\scru$ denote the cover of $Y$ given by $f^{-1}\scru=\{f^{-1}(U):U\in \scru\}$. If $K$ is an abstract simplicial complex, let $K_n$ denote the set of $n$-simplices of $K$. We use $K^{(n)}$ to denote the $n$-skeleton of $K$, that is, the set of all simplices of dimension not greater than $n$. The geometric realization $|K|$ is the set of all functions $\alpha:K_0\to I$ which satisfy
\begin{enumerate}
\item $\{v\in K_0:\alpha(v)\neq 0\}$ is a simplex of $K$,
\item $\sum_{v\in K_0}\alpha(v)=1$.
\end{enumerate}
If $\sigma\in K$ is a simplex of $K$, let $|\sigma|\subseteq |K|$ denote the set of all $\alpha\in |K|$ with support in $\sigma$. The topology of $|K|$ is the CW topology, in which a set $U$ is open in $|K|$ if $U\cap |\sigma|$ is open in $|\sigma|\cong \Delta^n$ for every $\sigma\in K$. If $\sigma\in K$ is an $n$-simplex of $K$, let $\p\sigma$ denote the set of all $(n-1)$-simplices contained in $\sigma$. A triangulation $T$ of a space $X$ is a homeomorphism $T:|K|\to X$ for some simplicial complex $K$.

\subsection{Complexes with the metric topology}

For a vertex $v\in K$, the \textit{barycentric coordinate} $\psi_v:|K|\to I$ is the function defined by $\psi_v(\alpha)=\alpha(v)$. For each vertex $v$, $\psi_v$ is continuous \cite[Appendix 1, Corollary 2]{MS82}. The barycentric coordinates can be used to define a metric $d_m$ on the set $|K|$ by setting $d_m(\alpha,\beta)=\sum_{v\in K_0}|\psi_v(\alpha)-\psi_v(\beta)|$. We denote by $|K|_m$ the space whose underlying set is $|K|$ and has the metric topology inherited from the metric $d_m$. The space $|K|$ has a finer topology than $|K|_m$, hence the identity map $|K|\to |K|_m$ is continuous, and is a homeomorphism if and only if $K$ is locally finite. Though $\id:|K|\to |K|_m$ is not a homeomorphism if $K$ is not locally finite, it is always a homotopy equivalence \cite[Appendix 1, Theorem 10]{MS82}. The following is a useful characterization of continuous maps into $|K|_m$.

\begin{lemma}\cite[Appendix 1, Theorem 8]{MS82}\label{coord}
If $K$ is a simplicial complex and $Z$ is a topological space, a function $f:Z\to |K|_m$ is continuous if and only if $\psi_v\circ f:Z\to I$ is continuous for all vertices $v\in K_0$.
\end{lemma}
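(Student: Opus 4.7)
The forward implication is immediate: each barycentric coordinate $\psi_v\colon |K|_m\to I$ is $1$-Lipschitz (since $|\psi_v(\alpha)-\psi_v(\beta)|$ is a summand of $d_m(\alpha,\beta)$), hence continuous, so if $f$ is continuous then so is every composition $\psi_v\circ f$. The real content is the converse, so the plan is to fix $z_0\in Z$ and $\epsilon>0$ and produce a neighborhood of $z_0$ on which $d_m(f(\,\cdot\,),f(z_0))<\epsilon$, working only with finitely many coordinates.

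The crucial observation is that although $K_0$ may be uncountable, the element $f(z_0)\in|K|$ has finite support $S_0=\{v\in K_0:\psi_v(f(z_0))\neq 0\}$. Using the constraints $\sum_{v\in K_0}\psi_v(f(z))=\sum_{v\in K_0}\psi_v(f(z_0))=1$, the infinite sum defining $d_m(f(z),f(z_0))$ collapses to a finite one. Explicitly, I would split the sum at $S_0$, note that for $v\notin S_0$ the summand is just $\psi_v(f(z))$, and then rewrite
\[
\sum_{v\notin S_0}\psi_v(f(z))\;=\;1-\sum_{v\in S_0}\psi_v(f(z))\;=\;\sum_{v\in S_0}\bigl[\psi_v(f(z_0))-\psi_v(f(z))\bigr].
\]
Bounding each term on the right by the corresponding $|\psi_v(f(z))-\psi_v(f(z_0))|$ yields the finite estimate
\[
d_m(f(z),f(z_0))\;\le\;2\sum_{v\in S_0}\bigl|\psi_v(f(z))-\psi_v(f(z_0))\bigr|.
\]

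With this inequality in hand the conclusion is routine: since $S_0$ is finite and each $\psi_v\circ f$ is assumed continuous, the intersection $W=\bigcap_{v\in S_0}(\psi_v\circ f)^{-1}\bigl((\psi_v(f(z_0))-\tfrac{\epsilon}{2|S_0|},\psi_v(f(z_0))+\tfrac{\epsilon}{2|S_0|})\bigr)$ is an open neighborhood of $z_0$ on which the bound gives $d_m(f(z),f(z_0))<\epsilon$.

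The only conceptual obstacle is recognizing that the apparent infinitude of the sum in $d_m$ is illusory once one exploits that both $f(z)$ and $f(z_0)$ are probability distributions on $K_0$; after that algebraic step the argument is a standard $\epsilon$–neighborhood construction using only the finitely many vertices in $S_0$. No local finiteness assumption on $K$ is required, which is exactly why the metric topology $|K|_m$ is the right setting for this characterization.
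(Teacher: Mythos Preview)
Your argument is correct. The forward direction is immediate from the $1$-Lipschitz property of each $\psi_v$, and for the converse your key inequality
\[
d_m(f(z),f(z_0))\le 2\sum_{v\in S_0}\bigl|\psi_v(f(z))-\psi_v(f(z_0))\bigr|
\]
is valid: splitting the defining sum at $S_0=\supp f(z_0)$ and using $\sum_{v}\psi_v(f(z))=\sum_{v}\psi_v(f(z_0))=1$ collapses the off-support tail to $\sum_{v\in S_0}[\psi_v(f(z_0))-\psi_v(f(z))]$, which is dominated by the corresponding sum of absolute values. The $\epsilon$-neighborhood construction then works exactly as you describe.

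As for comparison with the paper: there is nothing to compare against. The paper does not prove this lemma at all; it is stated with a citation to \cite[Appendix~1, Theorem~8]{MS82} and used as a black box. Your proof is essentially the standard one found in that reference, so you have supplied what the paper elected to omit.
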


\subsection{Vietoris complexes}\label{Vietoris-def}

If $(X, d_X)$ is a metric space and $r\in \bbr$, the (open) Vietoris--Rips complex $\VR(X;r)$ is the simplicial complex whose simplices are the finite subsets of $X$ with diameter less than $r$. A similar construction is the \v Cech complex $\C(X;r)$ whose simplices are finite subsets $\{x_1, \dots, x_n\}\subseteq X$ such that there exists $z\in X$ satisfying $d_X(z, x_i)<r$ for all $i\leq n$. Note that for $r\leq 0$, $\VR(X;r)$ and $\C(X;r)$ are both empty sets.

Both of these constructions are generalized by the Vietoris complex. Given a cover $\scru$ of $X$, the \textit{Vietoris complex} $\mcv(\scru)$ is the simplicial complex whose vertex set is $X$ and contains a simplex $\sigma=\{x_1, x_2, \dots, x_n\}\subseteq X$ if $\sigma\subseteq U$ for some $U\in \scru$. If $\scru$ is the cover of $X$ by open sets of diameter less than $r$, then $\vu=\VR(X;r)$. Alternatively, if $\scru$ is the open cover of $X$ by open balls of radius $r$, then $\vu=\C(X;r)$.

\begin{remark}
The open Vietoris--Rips complex is sometimes denoted $\VR_<(X;r)$ in order to distinguish it from the closed Vietoris--Rips complex $\VR_\leq(X;r)$: the complex whose simplices are the finite subsets of $X$ with diameter less than or equal to $r$. While work has been done on understanding the homotopy type of Vietoris--Rips complexes $|{\VR_\leq(X;r)}|$ with the $\leq$ convention and their corresponding metric thickenings $\VR^m_\leq(X;r)$ \cite{Metric, Moy}, the results of the present work only apply to open Vietoris--Rips complexes and their corresponding metric thickenings. Hence the notation $\VR(X;r)$ in the present work will always refer to the open Vietoris--Rips complex.
\end{remark}

A cover $\scru$ of $X$ is said to be \textit{uniformly bounded} if there exists $D<\infty$ such that $\diam(U)\leq D$ for all $U\in\scru$. The covers defining $\VR(X;r)$ and $\C(X;r)$ for any $0<r<\infty$ are examples of uniformly bounded covers.

\subsection{Vietoris metric thickenings}

If $(X,d_X)$ is a metric space, let $\mcp^{\fin}(X)$ denote the set of all probability measures on $X$ with finite support. If $\mu,\nu\in \mcp^{\fin}(X)$, a \textit{coupling between $\mu$ and $\nu$} is a probability measure $\gamma$ on $X\times X$ whose marginals on the first and second factors of $X\times X$ are $\mu$ and $\nu$ respectively, that is, $\gamma$ satisfies $\gamma(A\times X)=\mu(A)$ and $\gamma(X\times A))=\nu(A)$ for all Borel subsets $A\subseteq X$. If $\scrc$ denotes the set of couplings between $\mu,\nu\in \mcp^{\fin}(X)$, the 1-Wasserstein distance between $\mu$ and $\nu$ is
$$d_W(\mu,\nu)=\inf_{\gamma\in \scrc} \int_{X\times X}d_X(x,y)\gamma(dx\times dy).$$

Since we will only consider the 1-Wasserstein distance between finitely supported probability measures, we can alternatively define $d_W(\mu,\nu)$ as an infimum of finite sums in the following manner. For $x\in X$, let $\delta_x$ denote the Dirac probability measure at $x$. Each measure $\mu\in \mcp^{\fin}(X)$ has a unique representation $\mu=\sum_{i\in\mci}a_i\delta_{x_i}$ in which $\supp(\mu)=\{x_i\}_{i\in\mci}\subseteq X$ is a finite set indexed by $\mci$, $a_i>0$ for all $i\in \mci$, and $\sum_{i\in\mci}a_i=1$. If it will not cause confusion, we may omit the finite index set $\mci$ and simply write $\mu=\sum_i a_i\delta_{x_i}$. Now if $\mu=\sum_i a_i\delta_{x_i}$ and $\nu=\sum_j b_j\delta_{y_j}$, a coupling between $\mu$ and $\nu$ can be represented as a sum $\gamma=\sum_{i,j}\gamma_{i,j}\delta_{(x_i, y_j)}$ such that $\gamma_{i,j}\geq 0$ for all $i$ and $j$, $\sum_i \gamma_{i,j}=b_j$ for all $j$, and $\sum_j \gamma_{i,j}=a_i$ for all $i$. With this in mind, the $1$-Wasserstein distance is then
$$d_W(\mu,\nu)=\inf_{\gamma\in \scrc}\sum_{i,j}\gamma_{i,j}d_X(x_i, y_j).$$

The Wasserstein metric is sometimes known as the optimal transport metric. A coupling $\gamma$ between $\mu=\sum_i a_i\delta_{x_i}$ and $\nu=\sum_j b_j\delta_{y_j}$ can be regarded as a transport plan between $\mu$ and $\nu$, with the constants $\gamma_{i,j}$ specifying the amount of mass moved from $x_i$ to $y_j$, and $\gamma_{i,j}d_X(x_i, y_i)$ is the associated cost.

If $\scru$ is a cover of $X$, the \textit{Vietoris metric thickening} $\vm$ is the metric space of all $\mu\in \mcp^{\fin}(X)$ such that $\supp(\mu)$ is contained in an element of $\scru$, equipped with the 1-Wasserstein metric. Note that the map $X\to\vm$ defined by $x\mapsto \delta_x$ is an isometric embedding.

The underlying set of $\vm$ is the same as $\gv$ since any $\alpha\in \gv$ is a function $\alpha:X\to I$ with finite support contained in an element of $\scru$ satisfying $\sum_{x\in X}\alpha(x)=1$, hence can be equally regarded as a finitely supported probability measure $\mu\in\vm$. However, the topology of $\vm$ is coarser than that of $\gv$, so that while the natural bijection $\id:\gv\to\vm$ is continuous \cite[Proposition 6.1]{Metric}, it is not in general a homeomorphism. 

Recalling the barycentric coordinate functions $\psi_v$,  we may on occasion find it useful to represent $\mu\in\vm$ as the sum $\mu=\sum_{x\in X}\psi_x(\mu)\delta_x$, which, while indexed by $X$, has only finitely many non-zero terms.

\begin{lemma}\cite[Lemma 2.1]{Adams}\label{linear}
Let $X$ and $Z$ be metric spaces, and let $f,g:Z\to \mcp^{\fin}(X)$ be continuous maps. Then the linear homotopy $H:Z\times I\to \mcp^{\fin}(X)$ given by $H(z,t)=(1-t)f(z)+tg(z)$ is continuous.
\end{lemma}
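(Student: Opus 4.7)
The plan is to bound $d_W(H(z,t), H(z_0,t_0))$ via the triangle inequality
\[ d_W(H(z,t), H(z_0,t_0)) \leq d_W(H(z,t), H(z_0, t)) + d_W(H(z_0, t), H(z_0, t_0)) \]
and control the $z$-variation and $t$-variation separately.

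For the first summand I would use the fact that a convex combination of couplings couples the corresponding convex combination of measures. Precisely, if $\gamma_f$ couples $f(z)$ with $f(z_0)$ and $\gamma_g$ couples $g(z)$ with $g(z_0)$, then $(1-t)\gamma_f + t\gamma_g$ is a valid coupling between $H(z,t)$ and $H(z_0,t)$ whose cost is $(1-t)\,\text{cost}(\gamma_f) + t\,\text{cost}(\gamma_g)$. Taking $\gamma_f, \gamma_g$ to be near-optimal yields
\[ d_W(H(z,t), H(z_0,t)) \leq (1-t)\, d_W(f(z), f(z_0)) + t\, d_W(g(z), g(z_0)), \]
and both terms on the right tend to $0$ as $z \to z_0$ by continuity of $f$ and $g$.

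For the second summand, write $f(z_0) = \sum_i a_i \delta_{x_i}$ and $g(z_0) = \sum_j b_j \delta_{y_j}$, and assume without loss of generality that $t \leq t'$. I would construct an explicit coupling between $H(z_0, t)$ and $H(z_0, t')$ that keeps the ``common'' mass $(1-t')a_i$ at each $x_i$ and $tb_j$ at each $y_j$ fixed, and transports the differential mass $(t' - t)a_i$ leaving each $x_i$ into the $y_j$'s according to a scaled near-optimal coupling $\gamma^*_{ij}$ between $f(z_0)$ and $g(z_0)$. A short marginal check ($\sum_j \gamma^*_{ij}= a_i$ and $\sum_i \gamma^*_{ij}= b_j$) shows the resulting plan is legal, and its total cost is
\[ d_W(H(z_0, t), H(z_0, t')) \leq |t-t'|\, d_W(f(z_0), g(z_0)). \]
Because $f(z_0)$ and $g(z_0)$ have finite support, $d_W(f(z_0), g(z_0))$ is finite, so this bound tends to $0$ as $t' \to t$.

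The main obstacle I anticipate is just the bookkeeping needed to verify the marginals and cost of the explicit coupling in the second step; conceptually, nothing beyond convex-linearity of the Wasserstein cost in its couplings is required. Combining the two estimates via the triangle inequality then gives joint continuity of $H$ at every $(z_0, t_0) \in Z \times I$.
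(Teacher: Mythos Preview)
Your argument is correct. The triangle-inequality split together with the two estimates
\[
d_W\bigl(H(z,t),H(z_0,t)\bigr)\le (1-t)\,d_W\bigl(f(z),f(z_0)\bigr)+t\,d_W\bigl(g(z),g(z_0)\bigr)
\]
and
\[
d_W\bigl(H(z_0,t),H(z_0,t')\bigr)\le |t-t'|\,d_W\bigl(f(z_0),g(z_0)\bigr)
\]
is exactly the standard route, and your coupling constructions verifying them are sound (your marginal check for the second coupling works as written).

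Note, however, that the paper does not actually supply a proof of this lemma: it is simply quoted from \cite[Lemma~2.1]{Adams}. So there is no in-paper argument to compare against; your write-up is essentially what one finds in the cited source, and would serve as a self-contained replacement for the bare citation.
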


In particular, \Cref{linear} implies that if $f,g:Z\to \vm$ are continuous, then whenever the linear homotopy $H:Z\times I\to \vm$ between $f$ and $g$ is well-defined, it is also continuous.

\subsection{Open covers of Vietoris metric thickenings}

Following the terminology of \cite{Adams}, if $U$ is a subset of $X$ and $0<p<1$, we say that a subset $A\subseteq \vm$ has the \textit{mass concentration property} for the pair $(p,U)$ if $\mu(U)>p$ for all $\mu\in A$. In this case, we write that $A$ has $\MCP(p,U)$. If $U\subseteq X$, a subset $A\subseteq \mcp^{\fin}(X)$ is \textit{$U$-pumping convex} if for any $\mu\in A$ and $\nu\in\mcp^{\fin}(X)$ satisfying $\supp(\nu)\subseteq\supp(\mu)\cap U$ we have that $(1-t)\mu+t\nu\in A$ for all $t\in I$.

If $U$ is a subset of $X$, let $M_U\subseteq \mcp^{\fin}(X)$ be the set of all $\mu\in\mcp^{\fin}(X)$ such that $\supp(\mu)\subseteq U$. If $\scru$ is an open cover of $X$, then $M_\scru=\{M_U:U\in\scru\}$ is a cover of $\vm$. However, $M_\scru$ is not necessarily an open cover. In \cite{Adams} it was shown that the cover $M_\scru$ can be ``thickened'' with respect to a choice of $0<p<1$ to construct an open cover $\wt M_\scru$ of $\vm$ with several key properties.

\begin{proposition}\label{cover}
Let $\scru$ be an open cover of $X$. For every $0<p<1$, there exists an open cover $\wt M_\scru=\{\wt M_U\}_{U\in\scru}$ of $\mcv^m(\scru)$ such that for every $U\in\scru$,
\begin{enumerate}
\item $M_U\subseteq \wt M_U$,
\item $\wt M_U$ is $U$-pumping convex,
\item $\wt M_U$ has $\MCP(p, U)$.
\end{enumerate}
\end{proposition}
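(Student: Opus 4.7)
The plan is to set
$$\wt M_U := \{\mu \in \vm : \mu(U) > p\}$$
for each $U \in \scru$. Conditions (1) and (3) are then automatic: any $\mu \in M_U$ has $\supp(\mu) \subseteq U$ and so $\mu(U) = 1 > p$, while $\MCP(p, U)$ is built into the definition. For condition (2), suppose $\mu \in \wt M_U$ and $\nu \in \mcp^{\fin}(X)$ satisfies $\supp(\nu) \subseteq \supp(\mu) \cap U$, so $\nu(U) = 1$. Then $((1-t)\mu + t\nu)(U)$ is a convex combination of $\mu(U) > p$ and $\nu(U) = 1 > p$, hence itself $> p$; and since $\supp((1-t)\mu + t\nu) \subseteq \supp(\mu)$ sits in an element of $\scru$, the measure belongs to $\vm$ and hence to $\wt M_U$. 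That the $\wt M_U$ cover $\vm$ follows similarly: any $\mu \in \vm$ has $\supp(\mu) \subseteq U$ for some $U \in \scru$, forcing $\mu(U) = 1 > p$.

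The only nontrivial step is to verify that each $\wt M_U$ is open in the $1$-Wasserstein topology, equivalently that $\mu \mapsto \mu(U)$ is lower semi-continuous on $\vm$ for $U$ open. Given $\mu = \sum_i a_i\delta_{x_i} \in \wt M_U$, I would set $\epsilon := \mu(U) - p > 0$ and exploit the finiteness of $\supp(\mu)$ and the openness of $U$ by defining
$$\eta := \min\{d_X(x, X \setminus U) : x \in \supp(\mu) \cap U\} > 0.$$
The claim is then that the open $d_W$-ball of radius $\eta\epsilon$ about $\mu$ is contained in $\wt M_U$.

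To verify the claim, given $\nu = \sum_j b_j\delta_{y_j}$ with $d_W(\mu,\nu) < \eta\epsilon$, pick a coupling $\gamma = \sum_{i,j}\gamma_{i,j}\delta_{(x_i, y_j)}$ of transport cost below $\eta\epsilon$. For every pair $(i,j)$ with $x_i \in U$ and $y_j \notin U$ one has $d_X(x_i, y_j) \geq \eta$, hence
$$\sum_{x_i \in U,\, y_j \notin U}\gamma_{i,j} \;\leq\; \frac{1}{\eta}\sum_{x_i \in U,\, y_j \notin U}\gamma_{i,j}\,d_X(x_i, y_j) \;<\; \epsilon.$$
Combining this with the marginal identities $\mu(U) = \sum_{x_i \in U,\,j}\gamma_{i,j}$ and $\nu(U) \geq \sum_{x_i \in U,\, y_j \in U}\gamma_{i,j}$ yields $\nu(U) > \mu(U) - \epsilon = p$, so $\nu \in \wt M_U$. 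The delicate point --- and the main obstacle --- is this quantitative control over how much mass can leak out of the open set $U$ under Wasserstein-small perturbations, which is what the positive lower bound $\eta$ supplies; once it is in hand, everything else is routine.
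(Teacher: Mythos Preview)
Your proposal is correct and matches the paper's approach essentially line for line: the paper's proof cites \cite{Adams}, but the remark immediately following defines $\wt M_U=\{\mu\in\vm:\mu(U)>p\}$, notes that (1)--(3) are immediate, and proves openness via the same quantitative bound $d_W(\mu,\nu)>(\mu(U)-p)\cdot d_X(\supp(\mu)\cap U,U^C)$ you derive (argued contrapositively rather than directly, but with the same $\eta\epsilon$ radius).
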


\begin{proof}
See \cite[pg. 8]{Adams}.
\end{proof}

\begin{remark}
The open covers in \cite{Adams} are defined in such a way that the covers' elements are open and satisfy properties (1)--(3) by construction. However, a simpler way to verify \Cref{cover} is the following. For any $0<p<1$, let $\wt M_\scru$ be the collection of sets $\wt M_U=\{\mu\in \vm: \mu(U)>p\}$. Then it is clear that conditions (1)--(3) hold. Lastly, the sets $\wt M_U$ are open in $\vm$ due to the fact that for any $U\in\scru$ and $\mu\in \wt M_U$, we have $d_X(\supp(\mu)\cap U, U^C)>0$ since $\supp(\mu)$ is a finite set. Hence if $\nu\notin \wt M_U$, then any coupling $\gamma$ between $\mu$ and $\nu$ must transport a mass of at least $\mu(U)-p$ a distance of at least $d_X(\supp(\mu)\cap U, U^C)>0$, or more precisely, $\gamma$ must satisfy
$$\gamma\Big((\supp(\mu)\cap U)\times U^C\Big)\geq \mu(U)-p.$$
From this it follows that $d_W(\mu,\nu)>(\mu(U)-p)d_X(\supp(\mu)\cap U, U^C)$, and since this bound does not depend on $\nu$, it follows that $\mu$ is an interior point of $\wt M_U$. Hence the sets $\wt M_U$ are open in $\vm$.

\end{remark}

We will also need the following lemma, which describes how under certain conditions a subset $A$ with $\MCP(p,U)$ can be mapped continuously into $M_U$.

\begin{lemma}\cite[Lemma 4.3]{Adams}\label{pump}
Let $X$ be a metric space, and let the open set $U\subseteq X$ have finite diameter. Suppose $A\subseteq  \mcp^{\fin}(X)$ has $\MCP(p,U)$ for some $0<p<1$, and let $\phi:X\to [0,1]$ be an $L$-Lipschitz map with $\phi^{-1}(0)=U^C$. Then $f:A\to M_U\subseteq \mcp^{\fin}(X)$ defined as
$$f\Big(\sum_{i\in\mci} a_i\delta_{x_i}\Big)=\sum_{i\in\mci}\Big(\frac{a_i\phi(x_i)}{\sum_{j\in\mci} a_j\phi(x_j)}\Big)\delta_{x_i}$$
is continuous. If $A$ is furthermore $U$-pumping convex, then the homotopy $H:A\times [0,1]\to A$ defined by $H(\mu,t)=(1-t)\mu+tf(\mu)$ is well-defined and continuous.
\end{lemma}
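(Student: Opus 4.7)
The plan is to dispense with well-definedness first, then tackle continuity via Kantorovich--Rubinstein duality, and finally deduce the homotopy claim from \Cref{linear}. For well-definedness, given $\mu=\sum_i a_i\delta_{x_i}\in A$, the hypothesis $\phi^{-1}(0)=U^C$ ensures $\phi(x_i)>0$ exactly when $x_i\in U$, so the denominator $\sum_j a_j\phi(x_j)\geq \sum_{x_i\in U}a_i\phi(x_i)$ is strictly positive because $\MCP(p,U)$ guarantees $\mu(U)>p>0$. The numerators are nonzero only when $x_i\in U$, so $\supp(f(\mu))\subseteq \supp(\mu)\cap U$, proving $f(\mu)\in M_U$.

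To show $f$ is continuous at $\mu\in A$, I would bound $d_W(f(\mu),f(\nu))$ via the Kantorovich--Rubinstein formula
$$d_W(f(\mu),f(\nu))=\sup_{\text{Lip}(\psi)\leq 1}\Big(\int\psi\,df(\mu)-\int\psi\,df(\nu)\Big).$$
Writing $S(\mu):=\int \phi\,d\mu$, one has $\int\psi\,df(\mu)=S(\mu)^{-1}\int\psi\phi\,d\mu$, so the difference can be split as
$$\frac{\int\psi\phi\,d(\mu-\nu)}{S(\mu)}+\frac{\int\psi\phi\,d\nu}{S(\mu)S(\nu)}\bigl(S(\nu)-S(\mu)\bigr).$$
The key point is that $\psi\phi$ can be taken to be globally Lipschitz: since $f(\mu),f(\nu)$ are supported in the finite-diameter set $U$, translating $\psi$ by a constant lets us assume $|\psi|\leq D:=\diam(U)$ on $U$; then $\psi\phi$ vanishes off $U$, and across the boundary the inequality $\phi(x)\leq L\,d_X(x,U^C)$ (a consequence of $\phi|_{U^C}=0$ and $\phi$ being $L$-Lipschitz) patches the estimate, showing $\psi\phi$ is $O(LD)$-Lipschitz on all of $X$. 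Applying Kantorovich--Rubinstein to $\psi\phi$ and to $\phi$ then yields
$$d_W(f(\mu),f(\nu))\leq \Big(\frac{C_1}{S(\mu)}+\frac{C_2}{S(\mu)S(\nu)}\Big)\,d_W(\mu,\nu)$$
with constants $C_1,C_2$ depending only on $L$ and $D$. Since $S(\mu)>0$ and $S$ is itself $L$-Lipschitz in $d_W$, one has $S(\nu)\geq S(\mu)/2$ on a $d_W$-neighborhood of $\mu$, on which this gives local Lipschitz control of $f$.

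For the homotopy, well-definedness is immediate: $\supp(f(\mu))\subseteq\supp(\mu)\cap U$, so $U$-pumping convexity of $A$ forces $(1-t)\mu+tf(\mu)\in A$ for all $t\in I$. Continuity of $H$ then follows from \Cref{linear} applied to the continuous maps $\mu\mapsto\mu$ and $\mu\mapsto f(\mu)$.

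The main obstacle is establishing global Lipschitz control on $\psi\phi$: one must combine the normalization $\psi\mapsto\psi-\psi(x_0)$ (using $\diam(U)<\infty$) with the bound $\phi(x)\leq L\,d_X(x,U^C)$ to control Lipschitz behavior across $\p U$. A related subtlety is that $S$ is not uniformly bounded below on $A$---$\MCP(p,U)$ guarantees mass in $U$, but allows $\phi$ to be arbitrarily small on that mass---so one must settle for local rather than uniform Lipschitz continuity of $f$, which is enough.
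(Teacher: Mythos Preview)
The paper does not give its own proof of this lemma; it is quoted verbatim from \cite[Lemma 4.3]{Adams} and used as a black box. So there is no in-paper argument to compare against.

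Your argument is correct. A few remarks on the details you flagged:
\begin{itemize}
\item The Lipschitz bound on $\psi\phi$ works exactly as you indicate: after translating so that $|\psi|\leq D$ on $U$, one has for $x\in U$, $y\in U^C$ that $|\psi(x)\phi(x)|\leq D\cdot L\,d_X(x,U^C)\leq DL\,d_X(x,y)$, while for $x,y\in U$ the product rule gives $(DL+1)\,d_X(x,y)$. So the constant is $DL+1$ rather than $O(DL)$; this does not affect the argument but is worth stating precisely, since $DL$ could be small.
\item Your observation that $S(\mu)=\int\phi\,d\mu$ need not be uniformly bounded below on $A$ is exactly right: $\MCP(p,U)$ only forces mass into $U$, not into any sublevel set $\{\phi\geq c\}$. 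Local Lipschitz continuity (via $S(\nu)\geq S(\mu)/2$ on a neighborhood) is both what you obtain and all you need.
\item The translation trick is legitimate because $f(\mu)$ and $f(\nu)$ are probability measures, so adding a constant to $\psi$ does not change $\int\psi\,df(\mu)-\int\psi\,df(\nu)$.
\end{itemize}
The well-definedness and homotopy portions are straightforward and correctly handled; the appeal to \Cref{linear} for continuity of $H$ is exactly how the paper itself uses that lemma elsewhere.
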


\section{The Vietoris Complex with the Metric Topology}\label{topologies}

Let $\scru$ be an open cover of a metric space $X$. Though we can equip $\gv$ with the metric topology for simplicial complexes to obtain the space $\mgv$, note that this space is distinct from $\vm$. For one, the vertex set $V\subseteq \mgv$ is discrete, whereas the same set with the subspace topology from $\vm$ is homeomorphic to $X$.

Our reason for considering $\mgv$ is the following. Because $\id:|K|\to|K|_m$ is a homotopy equivalence for any simplicial complex $K$, we will prove \Cref{mainthm} by showing that $\id:\mgv\to\vm$ is a weak homotopy equivalence. We make this replacement because the continuity of maps $f:Z\to \mgv$ affords a convenient characterization via \Cref{coord}. 

First, we show that the identity $\mgv\to\vm$ is continuous so long as $\scru$ is uniformly bounded.

\begin{proposition}\label{incl}
If $\scru$ is a uniformly bounded open cover of $X$, then the identity map $\mgv\to\vm$ is continuous.
\end{proposition}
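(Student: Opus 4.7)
The plan is to establish continuity of $\id:\mgv\to\vm$ pointwise. Fix $\mu\in\mgv$, written as $\mu=\sum_{i=1}^k a_i\delta_{x_i}$ with every $a_i>0$, let $D<\infty$ be a uniform bound on the diameters of elements of $\scru$, and let $\epsilon>0$. I will produce $\delta>0$ such that $d_m(\mu,\nu)<\delta$ forces $d_W(\mu,\nu)<\epsilon$. The uniform boundedness hypothesis enters precisely here: it is what will let me convert $d_m$-proximity into a usable $d_W$-bound by controlling the cost of transport inside a single cover element.

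The first step is to arrange $\delta\leq \min_i a_i$. With this choice, any $\nu=\sum_j b_j\delta_{y_j}$ satisfying $d_m(\mu,\nu)<\delta$ must have $x_i\in\supp(\nu)$ for every $i$; otherwise $\psi_{x_i}(\nu)=0$ and $d_m(\mu,\nu)\geq a_i\geq\delta$. Hence $\supp(\mu)\subseteq\supp(\nu)$, and since $\supp(\nu)$ is a simplex of $\vu$, it lies in some single $U\in\scru$. In particular $d_X(x,y)\leq D$ for every $x,y\in\supp(\mu)\cup\supp(\nu)$.

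The second step is to exhibit a cheap coupling. Writing both measures over the common indexing set $\supp(\nu)$ (with $a_y=0$ for $y\notin\supp(\mu)$), I leave $\min(a_y,b_y)$ units of mass at each $(y,y)$ and redistribute the remaining mass
$$m=\sum_y\max(a_y-b_y,0)=\sum_y\max(b_y-a_y,0)$$
between the excess and deficit points by any valid plan. Since every point involved lies in $U$, the total cost of this coupling is at most $Dm$, so $d_W(\mu,\nu)\leq Dm$. A direct computation gives $d_m(\mu,\nu)=\sum_y|a_y-b_y|=2m$, so $d_W(\mu,\nu)\leq \tfrac{D}{2}d_m(\mu,\nu)$.

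Combining the two steps, I take $\delta=\min\bigl(\min_i a_i,\; 2\epsilon/(D+1)\bigr)>0$, which simultaneously guarantees $\supp(\mu)\subseteq\supp(\nu)$ and $d_W(\mu,\nu)<\epsilon$, proving continuity at $\mu$. The main subtlety, and the step that genuinely uses the hypothesis on $\scru$, is the passage from support containment to a uniform bound on transport distances: without uniform boundedness, $\supp(\nu)$ could spread arbitrarily far inside a large $U\in\scru$, and even very small $d_m$-perturbations could produce arbitrarily large Wasserstein distance.
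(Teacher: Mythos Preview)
Your proof is correct and follows essentially the same approach as the paper: both build the coupling that fixes $\min(\mu(x),\nu(x))$ on the diagonal and redistributes the remaining mass $\tfrac12 d_m(\mu,\nu)$, then bound the cost by the diameter of $\supp(\mu)\cup\supp(\nu)$ times this mass. The only minor difference is how that diameter is controlled: you take $\delta\leq\min_i a_i$ to force $\supp(\mu)\subseteq\supp(\nu)$ (hence diameter $\leq D$), whereas the paper uses only $d_m(\mu,\nu)<2$ to get $\supp(\mu)\cap\supp(\nu)\neq\emptyset$ (hence diameter $<2D$)---this yields a uniform estimate $d_W< D\cdot d_m$ independent of $\mu$, while your $\delta$ depends on $\mu$, but for continuity either suffices.
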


\begin{proof}
Since $\scru$ is uniformly bounded, let $D=\sup_{U\in\scru}\diam(U)$. Let $\mu\in\vm$ and let $0<r<2D$. Let $B= B_{\vm}(\mu, r)$ be the open ball in $\vm$ centered at $\mu$ with radius $r$, and let $B'=B_{\mgv}(\mu, r/D)$ be the open ball in $\mgv$ centered at $\mu$ of radius $r/D$. We claim that $B'\subseteq B$, which will show that the identity $\mgv\to\vm$ is continuous. So let $\nu\in B'$. Then
\begin{equation}\label{eq1}
\sum_{x\in X}|\psi_x(\mu)-\psi_x(\nu)|=\sum_{x\in X}|\mu(x)-\nu(x)|< \frac{r}{D}.
\end{equation}
Note that if $\supp(\mu)$ and $\supp(\nu)$ were disjoint, we would have $\sum_{x\in X}|\mu(x)-\nu(x)|=2$. Since $r/D<2$, it follows that $\supp(\mu)\cap\supp(\nu)\neq\emptyset$.

Let $\gamma$ be any coupling between $\mu$ and $\nu$ which satisfies 
$$\gamma(x,x)= \min\big(\mu(x), \nu(x)\big)$$ 
for all $x \in X$. In other words, $\gamma$ is a transport plan between $\mu$ and $\nu$ which keeps fixed any mass they already have in common. Note that (\ref{eq1}) implies that $\gamma$ has at most $r/(2D)$ of its mass off the diagonal of $X\times X$. Hence
$$\int_{X\times X}d_X(x, y)\gamma(dx\times dy)<\diam\Big(\supp(\mu)\cup\supp(\nu)\Big)\cdot \frac{r}{2D}.$$
Finally, we note that the diameters of $\supp(\mu)$ and $\supp(\nu)$ are each less than $D$, and since $\supp(\mu)\cap\supp(\nu)\neq \emptyset$, it follows that $\diam(\supp(\mu)\cup\supp(\nu))<2D$. Thus
$$d_W(\mu,\nu)\leq \int_{X\times X}d_X(x, y)\gamma(dx\times dy)< r,$$
which shows that $B'\subseteq B$.
\end{proof}

We now show that the linear homotopy of \Cref{linear} and the function defined in \Cref{pump} are continuous with respect to the topology of $\mgv$ under similar conditions.

\begin{lemma}\label{CW-linear}
Let $X$ and $Z$ be metric spaces, let $\scru$ be an open cover of $X$, let $f,g:Z\to \mcv^m(\scru)$ be functions, and suppose the linear homotopy $H:Z\times I\to \mcv^m(\scru)$ given by $H(z,t)=(1-t)f(z)+tg(z)$ is well-defined. If both $f$ and $g$ are continuous with respect to the topology of $\mgv$, then so is $H$.
\end{lemma}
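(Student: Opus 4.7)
The plan is to reduce the continuity of $H:Z\times I\to\mgv$ to the continuity of its barycentric coordinates via \Cref{coord}. That is, by \Cref{coord}, it suffices to show that $\psi_x\circ H:Z\times I\to I$ is continuous for each $x\in X$.

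For each vertex $x\in X$, I compute directly from the definition of $H$ and the linearity of $\psi_x$:
\[
\psi_x\bigl(H(z,t)\bigr)=\psi_x\bigl((1-t)f(z)+tg(z)\bigr)=(1-t)\psi_x(f(z))+t\psi_x(g(z)).
\]
Since $f$ and $g$ are continuous as maps into $\mgv$, \Cref{coord} tells us that $\psi_x\circ f:Z\to I$ and $\psi_x\circ g:Z\to I$ are continuous. Therefore the formula above exhibits $\psi_x\circ H$ as an affine combination, with continuous coefficients in $t$, of continuous real-valued functions of $z$, hence is continuous on $Z\times I$.

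Applying \Cref{coord} in the opposite direction then yields that $H:Z\times I\to\mgv$ is continuous. No obstacle arises here; the only subtlety is that the linear combination $(1-t)f(z)+tg(z)$ must lie in $\mcv^m(\scru)$ so that $H$ is well-defined as a map into the Vietoris metric thickening, but this is explicitly assumed in the statement. In particular, this argument does not require $\scru$ to be uniformly bounded, since the characterization of continuity via barycentric coordinates holds for any simplicial complex.
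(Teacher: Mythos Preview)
Your proof is correct and follows essentially the same approach as the paper: both reduce continuity of $H$ to continuity of each barycentric coordinate $\psi_v\circ H$ via \Cref{coord}, compute $\psi_v\circ H(z,t)=(1-t)(\psi_v\circ f)(z)+t(\psi_v\circ g)(z)$, and conclude from the continuity of $\psi_v\circ f$ and $\psi_v\circ g$. Your additional remarks about well-definedness and the irrelevance of uniform boundedness are accurate but not part of the paper's argument.
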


\begin{proof}
If both $f$ and $g$ are continuous with respect to the topology of $\mgv$, then $\psi_v\circ f$ and $\psi_v\circ g$ are continuous for every vertex $v\in X$ of $\vu$. Note that
\begin{align*}
(\psi_v\circ H)(z,t) &= \psi_v\Big((1-t)f(z)+tg(z)\Big)\\
&=(1-t)(\psi_v\circ f)(z)+t(\psi_v\circ g)(z).
\end{align*}
Hence $\psi_v\circ H$ is continuous for each vertex $v$, in which case $H$ is continuous with respect to the topology of $\mgv$ by \Cref{coord}.
\end{proof}

\begin{lemma}\label{CW-cont}
Suppose $A\subseteq\vm$ has $\MCP(p, U)$ for some $p>0$ and open set $U\subseteq X$. Let $\phi:X\to I$ be a continuous map satisfying $\phi^{-1}(0)=U^C$. The map $\Phi:A\to M_U$ defined by
$$\Phi\Big(\sum_{i\in\mci} a_i\delta_{x_i}\Big)=\sum_{i\in\mci}\Big(\frac{a_i\phi(x_i)}{\sum_{j\in\mci} a_j\phi(x_j)}\Big)\delta_{x_i}.$$
is continuous with respect to the topologies of $A$ and $M_U$ regarded as subspaces of $\mgv$.
\end{lemma}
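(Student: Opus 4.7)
The plan is to apply \Cref{coord}. Since $M_U$ carries the subspace topology inherited from $\mgv$, a map $\Phi : A \to M_U$ is continuous if and only if $\psi_v \circ \Phi : A \to I$ is continuous for every vertex $v \in X$ of $\vu$. The defining formula rewrites this composition as
$$(\psi_v \circ \Phi)(\mu) = \frac{\phi(v)\,\psi_v(\mu)}{\sum_{x \in X} \phi(x)\,\psi_x(\mu)},$$
where the denominator is a finite sum for each individual $\mu$ since $\psi_x(\mu) = 0$ whenever $x \notin \supp(\mu)$.

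The numerator is a constant multiple of the continuous barycentric coordinate $\psi_v$, so the real content is continuity and positivity of the denominator. I would show that the function $g : \mgv \to \bbr$ defined by $g(\mu) = \sum_{x \in X} \phi(x)\,\psi_x(\mu)$ is $1$-Lipschitz with respect to the metric $d_m$. For any $\mu, \nu \in \mgv$, the difference $g(\mu) - g(\nu)$ is a finite sum supported in $\supp(\mu) \cup \supp(\nu)$, and since $\phi$ takes values in $I$,
$$|g(\mu) - g(\nu)| \leq \sum_{x \in X} |\phi(x)| \cdot |\psi_x(\mu) - \psi_x(\nu)| \leq d_m(\mu, \nu).$$
Notably this uses only $\|\phi\|_\infty \leq 1$ rather than a Lipschitz hypothesis on $\phi$, which is why the assumptions here are weaker than in \Cref{pump}.

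To finish, I would check that $g(\mu) > 0$ for all $\mu \in A$. Since $A$ has $\MCP(p, U)$, each $\mu \in A$ satisfies $\mu(U) > p > 0$, so there exists some $x \in \supp(\mu) \cap U$. For any such $x$ one has $\psi_x(\mu) > 0$, and $\phi(x) > 0$ because $x \in U = X \setminus \phi^{-1}(0)$ and $\phi$ is nonnegative; thus $g(\mu) > 0$. Consequently $\psi_v \circ \Phi$ is a ratio of continuous functions with nowhere-vanishing denominator on $A$, hence continuous, and \Cref{coord} completes the proof. The only mild subtlety is that $g$ is formally an infinite sum, so its continuity does not follow termwise from the continuity of the $\psi_x$; the Lipschitz bound is how one bypasses this, and everything else is routine manipulation of barycentric coordinates.
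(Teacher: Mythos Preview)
Your proposal is correct and follows essentially the same route as the paper's proof: both compute $\psi_v\circ\Phi$ explicitly, use the $1$-Lipschitz estimate $|g(\mu)-g(\nu)|\leq d_m(\mu,\nu)$ to obtain continuity of the denominator, invoke $\MCP(p,U)$ to show the denominator is strictly positive, and then appeal to \Cref{coord}. Your remark that only $\|\phi\|_\infty\leq 1$ (rather than a Lipschitz condition on $\phi$) is needed here is a nice observation and is implicit in the paper's argument as well.
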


\begin{proof}
Let $A$ and $M_U$ have the subspace topology with respect to $\mgv$. Note that a point $\mu=\sum_ia_i\delta_{x_i}\in\mcv^m(\scru)$ can alternatively be written $\mu=\sum_{x\in X}\psi_{x}(\mu)\delta_{x}$. Then if $v\in \vu$ is a vertex, and  $\mu=\sum_{x\in X}\psi_{x}(\mu)\delta_{x}\in A$, we have
\begin{align}\label{eq2}
\begin{split}
\psi_v\Big(\Phi\Big(\sum_{x\in X}\psi_{x}(\mu)\delta_{x}\Big)\Big) &=\psi_v\Big(\sum_{x\in X}\Big(\frac{\psi_{x}(\mu)\phi(x)}{\sum_{y\in X}\psi_{y}(\mu)\phi(y)}\Big)\delta_{x}\Big)\\ &=\frac{\psi_v(\mu)\phi(v)}{\sum_{y\in X}\psi_{y}(\mu)\phi(y)}.
\end{split}
\end{align}
The fact that $A$ has $\MCP(p,U)$ implies that there exists $y\in U$ such that $\psi_y(\mu)>0$. Then since $\phi(y)>0$ for all $y\in U$, we have that
$$\sum_{y\in X}\psi_{y}(\mu)\phi(y)>0$$
for all $\mu\in A$. Hence $\psi_v\circ\Phi$ is well-defined. Additionally, the function $A\to I$ given by $\mu\mapsto \sum_{y\in X}\psi_{y}(\mu)\phi(y)$ is continuous due to the fact that
$$\Big|\sum_{y\in X}\psi_{y}(\mu)\phi(y)- \sum_{y\in X}\psi_{y}(\nu)\phi(y)\Big| \leq\sum_{y\in X}\phi(y)\big|\psi_y(\mu)-\psi_y(\nu)\big|\leq d_m(\mu, \nu),$$
where $d_m$ is the metric on $\mgv$. Since $\psi_v:A\to I$ is continuous, (\ref{eq2}) implies that $\psi_v\circ\Phi$ is continuous. Since $v$ was arbitrary, \Cref{coord} implies that $\Phi:A\to M_U$ is continuous with respect to the topologies of $A$ and $M_U$ as subspaces of $\mgv$.
\end{proof}

\section{Deforming maps of simplices into Vietoris metric thickenings}\label{deforming}

To show that $\mgv\to\vm$ is a weak homotopy equivalence, our goal is to show that given a map $f:I^n\to \vm$ such that $f|_{\p I^n}$ is continuous as a map $f|_{\p I^n}:\p I^n\to \mgv$, there exists a homotopy $H:I^n\times I\to \vm$ rel.\ $\p I^n$ between $f$ and a continuous map $g:I^n\to \mgv$. To do this, we will construct a homotopy inductively on the $k$-skeleton of a triangulation of $I^n$. In this section, we establish a lemma which will serve as an important tool in constructing this homotopy.

First, we start with a lemma concerning the mass concentration property for compact sets.

\begin{lemma}\label{inner}
Suppose $A\subseteq \mcp^{\fin}_X$ is compact and has $\MCP(p, U)$ for some $0<p<1$ and open set $U\subseteq X$. Then there exists an open set $V\subseteq U$ such that $d(V, U^C)>0$ and $A$ has $\MCP(p, V)$.
\end{lemma}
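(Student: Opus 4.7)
The plan is to approximate $U$ from within by a family of open sets having positive distance from $U^C$, and then use compactness of $A$ together with a lower-semicontinuity argument to show that one of these approximations works uniformly for all $\mu \in A$. For each $\delta > 0$ I would set
$$W_\delta = \{x \in X : d(x, U^C) > \delta\},$$
which is open (as the preimage of $(\delta, \infty)$ under the continuous function $x \mapsto d(x, U^C)$), contained in $U$, and satisfies $d(W_\delta, U^C) \geq \delta > 0$. Since $U$ is open, the sets $W_\delta$ increase to $U$ as $\delta \searrow 0$. For any single $\mu \in A$, the finite set $\supp(\mu) \cap U$ lies at positive distance from $U^C$, so for all sufficiently small $\delta$ we have $\supp(\mu) \cap U \subseteq W_\delta$ and hence $\mu(W_\delta) = \mu(U) > p$.

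The crux is then to show that $\{\mu \in \vm : \mu(W_\delta) > p\}$ is open in $\vm$ with respect to $d_W$. Given such a $\mu$, let $F = \supp(\mu) \cap W_\delta$, let $\alpha = \min_{x \in F} d(x, U^C) > \delta$, and set $\beta = \alpha - \delta > 0$. The triangle inequality $d(x, U^C) \leq d(x, y) + d(y, U^C)$ shows that for every $x \in F$ and $y \in W_\delta^C$ we have $d(x, y) \geq \alpha - \delta = \beta$. Hence for any coupling $\gamma$ of $\mu$ and $\nu$,
$$\gamma(F \times W_\delta^C) \leq \frac{1}{\beta} \int_{X \times X} d(x, y)\, \gamma(dx \times dy).$$
Combining this with $\nu(W_\delta) \geq \gamma(F \times W_\delta) = \mu(F) - \gamma(F \times W_\delta^C)$ and taking the infimum over couplings yields $\nu(W_\delta) \geq \mu(W_\delta) - d_W(\mu, \nu)/\beta$, which proves openness.

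With these two facts in hand, the sets $B_n = \{\mu \in A : \mu(W_{1/n}) \leq p\}$ form a decreasing sequence of closed subsets of the compact set $A$ whose intersection is empty, so by compactness some $B_n$ is already empty, and $V = W_{1/n}$ gives the desired set. I expect the lower-semicontinuity step to be the main obstacle: one cannot appeal to $d(W_\delta, W_\delta^C) > 0$ (this distance can vanish, for instance when $U$ is an open half-space in $\mathbb{R}^n$), so the argument must exploit the sharper fact that $F$ lives inside a strictly smaller $W_\alpha$ and therefore has a genuine positive gap from $W_\delta^C$.
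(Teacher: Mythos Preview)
Your proposal is correct and follows essentially the same approach as the paper: the paper's sets $V_i=\{x\in U:d(x,U^C)>1/i\}$ are exactly your $W_{1/i}$, the paper's openness argument for $A_i=\{\mu\in A:\mu(V_i)>p\}$ is the same transport estimate you give (using that the finite set $\supp(\mu)\cap V_i$ lies at positive distance from $V_i^C$), and the paper concludes via the open-cover formulation of compactness where you use the equivalent finite-intersection-property formulation on the complements $B_n$. Your closing remark that one must exploit the finiteness of $F=\supp(\mu)\cap W_\delta$ rather than any global bound $d(W_\delta,W_\delta^C)>0$ is exactly the point the paper makes when it observes that $\supp(\mu)\cap V_i$ is a finite set disjoint from the closed set $V_i^C$.
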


\begin{proof}
For each $i\in\bbn$, let $V_i=\{x\in U:d(x, U^C)>1/i\}$ and let $A_i=\{\mu\in A:\mu(V_i)>p\}$. Note that for each $i\in\bbn$, $V_i$ is open in $X$. We first show that $A_i$ is open in $A$ for all $i\in\bbn$. Let $i\in\bbn$, let $\mu\in A_i$, and choose $r>0$ satisfying
$$0<r<(\mu(V_i)-p)\cdot d\Big(\supp(\mu)\cap V_i, V_i^C\Big).$$
This is possible since $\supp(\mu)\cap V_i$ is a finite set disjoint from the closed set $(V_i)^C$, hence $d(\supp(\mu)\cap V_i, V_i^C)>0$. If $\nu\in A\setminus A_i$, then $\nu(V_i)\leq p$. In this case, we have that $d_W(\mu,\nu)> r$ as any coupling $\gamma$ between $\mu$ and $\nu$ must move a mass of at least $\mu(V_i)-p$ a distance of at least $d(\supp(\mu)\cap V_i, V_i^C)$, or more precisely, $\gamma$ must satisfy 
$$\gamma\Big((\supp(\mu)\cap V_i)\times V_i^C\Big)\geq \mu(V_i)-p.$$
Hence $B(\mu; r)\cap A\subseteq A_i$, showing that $A_i$ is open in $A$.

Next we show that $\{A_i\}_{i\in\bbn}$ is an open cover of $A$. So let $\mu\in A$. Because $d(\supp(\mu)\cap U, U^C)>0$, if $i\in\bbn$ is chosen large enough so that $1/i<d(\supp(\mu)\cap U, U^C)$, we have that $\supp(\mu)\cap U\subseteq V_i$ and thus $\mu(V_i)=\mu(U)>p$. Thus $\{A_i\}_{i\in\bbn}$ is an open cover of $A$. Since $A$ is compact, we may find a finite subcover $\{A_{i_1}, A_{i_2}, \dots, A_{i_N}\}$. We may assume that $i_1<i_2<\cdots<i_N$, in which case $A_{i_1}\subseteq A_{i_2}\subseteq\cdots\subseteq A_{i_N}$, and thus $A=A_{i_N}$. This implies that for all $\mu\in A$, $\mu(V_{i_N})>p$, that is, $A$ has $\MCP(p, V_{i_N})$. By construction, $V_{i_N}$ satisfies $d(V_{i_N}, U^C)>0$.
\end{proof}

We are now ready to state and prove the main lemma of this section. Its statement and proof are inspired by \cite[Prop. 4.4]{Adams}, though with alterations specific to our approach.

\begin{lemma}\label{deform}
Let $\scru$ be a uniformly bounded open cover of a metric space $X$. Let $N\in\bbn$ with $N\geq 2$, and  let $\wt M_\scru$ be the open cover of $\mcv^m(\scru)$ constructed with respect to a choice of $p$ satisfying $1-1/N<p<1$. Let $U_1, \dots, U_n$ be a collection of elements of $\scru$ with $n\leq N$, and set $U=\ \bigcap_{i\leq n}U_i$. Suppose $V\subseteq U$ is an open set which satisfies $d(V, U^C)>0$.  Then given a map
$$g:(\Delta^k,\p\Delta^k)\to (\bigcap_{i\leq n}\wt M_{U_i},M_V),$$
there exists a homotopy $H:\Delta^k\times I\to \bigcap_{i\leq n}\wt M_{U_i}$ rel.\ $\p\Delta^k$ between $g$ and a map $g':\Delta^k\to M_{V'}$, where $V'\subseteq U$ is an open set which satisfies $d(V', U^C)>0$. Moreover, if $g$ is continuous with respect to the topology of $\mgv$, then we may assume $H$ is as well.
\end{lemma}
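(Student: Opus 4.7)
My approach is to construct the desired homotopy as a linear ``pumping'' homotopy via \Cref{pump}, after first engineering a suitable intermediate open set $V'$. Since each $\wt M_{U_i}$ has $\MCP(p, U_i)$ and $n \leq N$, for any $\mu \in \bigcap_{i \leq n} \wt M_{U_i}$ a union bound yields $\mu(U^C) \leq \sum_{i \leq n} \mu(U_i^C) < n(1-p) \leq N(1-p)$. Setting $q = 1 - N(1-p)$, which is positive because $p > 1 - 1/N$, the image $g(\Delta^k)$ has $\MCP(q, U)$ and is compact in $\vm$ by continuity of $g$ and compactness of $\Delta^k$.

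Next I would produce an open set $V' \subseteq U$ satisfying three conditions simultaneously: $d(V', U^C) > 0$, $V \subseteq V'$ with $d(V, (V')^C) > 0$, and $g(\Delta^k)$ has $\MCP(q, V')$. Imitating the proof of \Cref{inner} with the nested family $V_i = \{x \in U : d(x, U^C) > 1/i\}$, compactness of $g(\Delta^k)$ produces some $i_0$ for which it has $\MCP(q, V_{i_0})$. Enlarging $i_0$ further so that $1/i_0 < d(V, U^C)/2$ ensures $V \subseteq V_{i_0}$ together with $d(V, V_{i_0}^C) \geq d(V, U^C) - 1/i_0 > 0$. I would then take $V' = V_{i_0}$ and define $\phi(x) = \min\bigl(1,\, d(x, (V')^C)/d(V, (V')^C)\bigr)$, which is Lipschitz, vanishes precisely on $(V')^C$, and is identically $1$ on $V$.

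Now \Cref{pump} applies (its finite-diameter hypothesis holds because $V' \subseteq U_1$ and $\scru$ is uniformly bounded), yielding a continuous $f : g(\Delta^k) \to M_{V'}$. Setting $g' = f \circ g : \Delta^k \to M_{V'}$ and $H(x, t) = (1-t) g(x) + t g'(x)$, \Cref{linear} gives continuity of $H$ into $\vm$. The homotopy is rel.\ $\p\Delta^k$ because for $x \in \p\Delta^k$ we have $\supp(g(x)) \subseteq V$ and $\phi \equiv 1$ on $V$, forcing $f(g(x)) = g(x)$. The image stays in $\bigcap_{i \leq n} \wt M_{U_i}$ because this intersection is $U$-pumping convex (each $\wt M_{U_i}$ is $U_i$-pumping convex and $U \subseteq U_i$), combined with $\supp(g'(x)) \subseteq \supp(g(x)) \cap V' \subseteq \supp(g(x)) \cap U$. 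For the moreover clause, if $g$ is continuous into $\mgv$, substituting \Cref{CW-cont} for \Cref{pump} yields continuity of $f$ with respect to the subspace topologies from $\mgv$, and then \Cref{CW-linear} in place of \Cref{linear} gives continuity of $H$ into $\mgv$.

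The main obstacle is constructing $V'$: it must simultaneously stay a positive distance from $U^C$, contain $V$ with positive distance from its own complement, and carry the $\MCP$ for $g(\Delta^k)$. Threading all three requires combining the compactness argument behind \Cref{inner} with the standing hypothesis $d(V, U^C) > 0$ so that a single threshold $i_0$ works for every purpose at once; without the latter hypothesis one could not arrange $\phi \equiv 1$ on $V$ while keeping $\phi$ Lipschitz, which is precisely what makes the homotopy rel.\ $\p\Delta^k$.
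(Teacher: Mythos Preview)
Your proposal is correct and follows essentially the same approach as the paper: establish $\MCP(q,U)$ on $\bigcap_i \wt M_{U_i}$ via the union bound, pass to a compactly-supported inner set $V'\subseteq U$ with $d(V',U^C)>0$, build a Lipschitz $\phi$ that is identically $1$ on $V$ and vanishes on $(V')^C$, and then apply \Cref{pump}/\Cref{linear} (resp.\ \Cref{CW-cont}/\Cref{CW-linear}) for the homotopy and the moreover clause. The only cosmetic difference is in assembling $V'$: the paper first invokes \Cref{inner} to obtain some $U'\subseteq U$ and then sets $V'=U'\cup\bigcup_{x\in V}B(x;\epsilon)$ with $\epsilon=d(V,U^C)/2$, whereas you rerun the nested-$V_i$ argument from the proof of \Cref{inner} and simply take $i_0$ large enough to simultaneously capture $V$; both routes produce the same three properties for $V'$.
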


\begin{proof}
Since $1-1/N<p<1$, we have that $0<N(1-p)<1$. Since each $\wt M_{U_i}$ has $\MCP(p,U_i)$, if $\mu\in\wt M_{U_i}$, then $\mu(U_i^C)< 1-p$. Now if $\mu\in \bigcap_{i\leq n}\wt M_{U_i}$, it follows that
$$\mu(\bigcup_{i\leq n}U_i^C)< n(1-p)\leq N(1-p)<1.$$
This implies that $\mu(\bigcap_{i\leq n}U_i)>1-N(1-p)>0$, showing that $\bigcap_{i\leq n}\wt M_{U_i}$ has $\MCP(q, U)$ for $q=1-N(1-p)$. Let $A=g(\Delta^k)\subseteq \bigcap_{i\leq n}\wt M_{U_i}$. Then $A$ is compact and has $\MCP(q, U)$, hence by \Cref{inner}, there exists an open set $U'\subseteq U$ such that $d(U', U^C)>0$ and $A$ has $\MCP(q, U')$. Let $\epsilon=d(V, U^C)/2$ and let $V'=U'\cup \bigcup_{x\in V}B(x;\epsilon)$. Then $A$ has $\MCP(q, V')$, $d(V, (V')^C)>0$, and $d(V, U^C)>0$. Then we may find an $L$-Lipschitz function $\phi:X\to I$ for some $L>0$ such that $V\subseteq\phi^{-1}(1)$ and $(V')^C=\phi^{-1}(0)$. \Cref{pump} implies that the map $\Phi:A\to M_{V'}$ defined by 
$$\Phi\Big(\sum_i a_i\delta_{x_i}\Big)=\sum_i\Big(\frac{a_i\phi(x_i)}{\sum_j a_j\phi(x_j)}\Big)\delta_{x_i}$$
is continuous. Set $g'=\Phi\circ g$. Define $H:\Delta^k\times I\to \bigcap_{i\leq n}\wt M_{U_i}$ by $H(v, t)=(1-t)g(v)+tg'(v)$. Note that since $\wt M_{U_i}$ is $U_i$-pumping convex for every $i\leq n$, $\bigcap_{i\leq n}\wt M_{U_i}$ is $U$-pumping convex. Then $H$ is well-defined since $\bigcap_{i\leq n}\wt M_{U_i}$ is $U$-pumping convex and $\supp(g'(v))\subseteq \supp(g(v))\cap U$. Consequently, $H$ is continuous by \Cref{linear}.

Hence $g'$ has image in $M_{V'}$. Moreover, since $V\subseteq \phi^{-1}(1)$, we have that $\Phi(\mu)=\mu$ for all $\mu\in A\cap M_V$, hence $g'|_{\p\Delta^k}=g|_{\p\Delta^k}$. Then by the definition of $H$, we see that $H(z, t)=g(z)$ for all $z\in \p\Delta^k$ and $t\in I$.

Lastly, if $g$ is continuous with respect to the topology of $\mgv$, then so is $g'$ by \Cref{CW-cont}. Then from \Cref{CW-linear}, we also have that $H$ is continuous with respect to the topology of $\mgv$.
\end{proof}

\section{The Freudenthal--Kuhn triangulation}\label{triangulation}

\begin{definition}[Freudenthal--Kuhn triangulation of $\bbr^n$]
For $i\in\{1, 2, \dots, n\}$, let $e_i$ denote the canonical $i$-th basis vector of $\bbr^n$. Let $\pi\in S_n$ be a permutation of the set $\{1, 2, \dots, n\}$. For $x \in \bbz^n\subseteq \bbr^n$, let $\sigma(x, \pi)$ denote the convex hull of the points $v_0, v_1, \dots, v_n$ which are given by the equations $v_0=x$ and $v_i=v_{i-1} + e_{\pi(i)}$. The collection $\{\sigma(x, \pi): x\in \bbz^n, \pi\in S_n\}$ defines a triangulation of $\bbr^n$ \cite[Lemma 3.2]{Todd}, sometimes known as the Freudenthal--Kuhn triangulation of $\bbr^n$ due to its use by H. Freudenthal \cite{Freudenthal} as well as H. Kuhn \cite{Kuhn}.
\end{definition}

Less formally, the Freudenthal--Kuhn triangulation is constructed by first subdividing $\bbr^n$ into unit $n$-cubes of the form
$$Q_x=[x_1, x_1+1]\times\cdots\times [x_n, x_n+1]$$
for $x=(x_1, \dots, x_n)\in\bbz^n$. Each $n$-cube $Q_x$ is then triangulated by $n!$ many $n$-simplices in a canonical way so that the triangulations of each $Q_x$ fit together to define a triangulation of $\bbr^n$.

\begin{proposition}\label{tri}
Let $n\in\bbn$. There exists $\alpha(n)\in\bbn$ such that for every $\epsilon>0$ there exists a triangulation $T:|K|\to I^n$ which satisfies
\begin{enumerate}
\item $\diam T(|\sigma|)\leq \epsilon$ for all $\sigma\in K$,
\item every vertex $v\in K_0$ is contained in at most $\alpha(n)$ distinct $n$-simplices of $K$.
\end{enumerate}
\end{proposition}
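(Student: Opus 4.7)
The plan is to scale the Freudenthal--Kuhn triangulation of $\bbr^n$ by a factor of $1/m$ for sufficiently large $m$, and restrict it to $I^n$. Fix $\epsilon > 0$ and choose $m\in\bbn$ with $m\geq \sqrt{n}/\epsilon$. Scaling the Freudenthal--Kuhn triangulation by $1/m$ yields a triangulation of $\bbr^n$ whose $n$-simplices are the images $(1/m)\sigma(x,\pi)$ for $x\in\bbz^n$ and $\pi\in S_n$. Let $K$ be the subcomplex consisting of those simplices contained in $I^n$. Because $I^n$ is tiled by the scaled unit cubes $(1/m)Q_x$ for $x\in\{0,1,\dots,m-1\}^n$, and each such cube decomposes along shared faces into $n!$ of these simplices, the restriction defines a homeomorphism $T:|K|\to I^n$.

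Property (1) is immediate: the vertices of $\sigma(x,\pi)$ all lie in the unit cube $Q_x=\prod_i[x_i,x_i+1]$, so $\diam \sigma(x,\pi)\leq \diam Q_x=\sqrt{n}$. After scaling by $1/m$, every simplex $T(|\sigma|)$ has diameter at most $\sqrt{n}/m\leq\epsilon$.

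For property (2), I would exploit the translation invariance of the Freudenthal--Kuhn triangulation. Fix $w\in\bbz^n$ and count the simplices $\sigma(x,\pi)$ that have $w$ as a vertex. Such a simplex must satisfy $w = x+\sum_{i=1}^{k}e_{\pi(i)}$ for some $k\in\{0,1,\dots,n\}$, so $x$ is determined by the pair $(k,\pi)$. Moreover, each simplex $\sigma(x,\pi)$ uniquely determines $(x,\pi)$: the vertex $x$ is the componentwise minimum of the vertices of the simplex, and $\pi$ is recovered from the unique chain $v_0<v_1<\dots<v_n$ obtained by ordering the vertices componentwise. It follows that distinct pairs $(k,\pi)\in\{0,\dots,n\}\times S_n$ yield distinct simplices containing $w$, giving exactly $(n+1)\cdot n! = (n+1)!$ simplices containing $w$ in the full triangulation of $\bbr^n$. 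Scaling by $1/m$ preserves this count, and restricting to $K$ can only remove simplices, so setting $\alpha(n)=(n+1)!$ handles both interior and boundary vertices of $I^n$.

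The only genuine content of the proof is the combinatorial uniqueness of $(x,\pi)$ from $\sigma(x,\pi)$ in the previous paragraph; the diameter bound and the compatibility of the local cube triangulations across shared faces (required for $T$ to be a homeomorphism) follow directly from the definition of the Freudenthal--Kuhn triangulation. There is no major obstacle.
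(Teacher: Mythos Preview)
Your proof is correct and follows essentially the same approach as the paper: both scale the Freudenthal--Kuhn triangulation and restrict to the cube. The only difference is that the paper uses the cruder bound $\alpha(n)=2^n\cdot n!$ (each lattice point lies in at most $2^n$ unit cubes, each contributing $n!$ top-dimensional simplices), whereas your direct combinatorial count yields the sharper value $\alpha(n)=(n+1)!$.
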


\begin{proof}
Set $\alpha(n)=2^n\cdot n!$. Let $\epsilon>0$ and choose $p\in\bbn$ such that $\sqrt{n}/p<\epsilon$. Let $T:|K|\to \bbr^n$ be the Freudenthal--Kuhn triangulation of $\bbr^n$. Note that each $x\in\bbz^n$ is contained in $2^n$ distinct cubes $Q_x$. Moreover, for each $x\in\bbz^n$, the triangulation $T$ identifies $Q_x$ with a subcomplex of $K$ consisting of $n!$ many $n$-simplices. Since the vertex set of $K$ is identified with $\bbz^n$ under $T$, each vertex of $K$ is contained in at most $\alpha(n)=2^n\cdot n!$ distinct $n$-simplices of $K$. Observe that the diameter of each simplex in the triangulation is bounded by the diameter of a unit $n$-cube, which has diameter $\sqrt{n}$. Let $L\subseteq K$ be the subcomplex given by $|L|=T^{-1}([0,p]^n)$. If we compose the restriction of $T$ to $|L|$ with the linear map $[0,p]^n\to I^n$ which scales each coordinate by a factor of $1/p$, we obtain a triangulation $T':|L|\to I^n$ such that $\diam T'(|\sigma|)\leq \sqrt{n}/p< \epsilon$ for all $\sigma \in L$ and such that each vertex of $L$ is contained in at most $\alpha(n)=2^n\cdot n!$ distinct $n$-simplices of $L$.
\end{proof}

\begin{remark}
Barycentric subdivisions can be used to produce triangulations of $I^n$ which satisfy condition $(1)$ of \Cref{tri} for arbitrarily small $\epsilon$. However, since the upper bound on the number of distinct $n$-simplices which intersect at a point would depend on the number of barycentric subdivisions performed, this approach would not allow one to produce a value $\alpha(n)$ which is independent of $\epsilon$ and satisfies condition $(2)$.
\end{remark}

\section{Vietoris metric thickenings and complexes are weakly homotopy equivalent}\label{equivalence}

In this section, $\scru$ will always denote a uniformly bounded open cover of a metric space $X$, so that $\id:\mgv\to\vm$ is continuous by \Cref{incl}. To show that $\id:\mgv\to\vm$ is a weak homotopy equivalence, we use the following characterization of weak homotopy equivalences.

\begin{lemma}\label{weak-lemma}
A map $f:X\to Y$ between spaces $X$ and $Y$ is a weak homotopy equivalence if and only if $f$ satisfies the condition that for every commuting diagram of the form
$$\begin{tikzcd}
\p I^n \arrow[r, "\alpha"] \arrow[d, "i", hook] & X \arrow[d, "f"] \\
I^n \arrow[r, "\beta"]                          & Y               
\end{tikzcd}$$
there exists a map $\phi:I^n\to X$ such that $\alpha=\phi \circ i$ and $\beta\simeq f\circ\phi$ rel.\ $\p I^n$.
\end{lemma}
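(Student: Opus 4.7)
The plan is to prove both implications of this standard characterization of weak homotopy equivalences. The backward direction amounts to directly unpacking the definition of weak equivalence via bijectivity of $f_*$ on all homotopy groups and choices of basepoint, while the forward direction is cleanest via the mapping cylinder factorization of $f$.

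For the backward direction, suppose $f$ satisfies the stated lifting-up-to-homotopy property. Fix $x_0 \in X$, set $y_0 = f(x_0)$, and consider $f_* \colon \pi_n(X, x_0) \to \pi_n(Y, y_0)$. For surjectivity, given $\beta \colon (I^n, \p I^n) \to (Y, y_0)$, take $\alpha \colon \p I^n \to X$ to be the constant map at $x_0$; the lift $\phi$ provided by the hypothesis satisfies $\phi|_{\p I^n} \equiv x_0$ and $f \circ \phi \simeq \beta$ rel $\p I^n$, so $[\phi]$ is the required preimage. For injectivity (when $n \geq 1$), given $\phi_0, \phi_1 \colon (I^n, \p I^n) \to (X, x_0)$ with $f \circ \phi_0 \simeq f \circ \phi_1$ rel $\p I^n$ via some $H \colon I^n \times I \to Y$, identify $I^n \times I$ with $I^{n+1}$ and define $\alpha \colon \p I^{n+1} \to X$ to agree with $\phi_0$ and $\phi_1$ on the top and bottom faces and with the constant $x_0$ on $\p I^n \times I$. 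Taking $\beta = H$ makes the square commute, and the lift $\phi \colon I^{n+1} \to X$ provides a homotopy $\phi_0 \simeq \phi_1$ rel $\p I^n$. Surjectivity and injectivity on $\pi_0$ are handled by the $n = 0$ and $n = 1$ cases respectively.

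For the forward direction, suppose $f$ is a weak homotopy equivalence. Factor $f$ through the mapping cylinder as $X \hookrightarrow M_f \to Y$, where the second map is the standard deformation retraction $r$, hence a homotopy equivalence. Since $f = r \circ j$ and $r$ are both weak equivalences, so is the inclusion $j \colon X \hookrightarrow M_f$, which by the long exact sequence of the pair yields $\pi_n(M_f, X) = 0$ for all $n$. Now given the square, compose $\beta$ with the inclusion $Y \hookrightarrow M_f$ to obtain $\tilde\beta \colon I^n \to M_f$; its restriction to $\p I^n$ equals $f \circ \alpha$, which is related to $\alpha$ through the canonical cylinder paths $s \mapsto [\alpha(t), s] \in M_f$. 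Using the homotopy extension property for the cofibration $\p I^n \hookrightarrow I^n$, deform $\tilde\beta$ to a map $\tilde\beta' \colon I^n \to M_f$ with $\tilde\beta'|_{\p I^n} = \alpha$. The vanishing of $\pi_n(M_f, X)$ then produces a further homotopy rel $\p I^n$ of $\tilde\beta'$ to some $\phi \colon I^n \to X$. Composing everything with $r$ yields $f \circ \phi \simeq \beta$ in $Y$, and the crucial point is that $r$ collapses each cylinder line to a single point of $Y$, so the resulting homotopy is rel $\p I^n$.

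The main technical obstacle is precisely this last observation about compatibility along the boundary: the intermediate adjustment $\tilde\beta \rightsquigarrow \tilde\beta'$ in $M_f$ is not rel $\p I^n$, but becomes rel $\p I^n$ once projected back to $Y$ via $r$, because $r$ is constant along each generating cylinder line. Beyond tracking this, the argument is routine and uses only standard facts about mapping cylinders, cofibrations, and the long exact sequence of a pair.
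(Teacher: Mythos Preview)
Your proof is correct and follows essentially the same route as the paper's: the paper does not give a full proof but cites Bredon and sketches the mapping-cylinder argument, noting that the lifting condition is equivalent to $\pi_n(M_f,X)=0$ for $n\geq 1$ together with surjectivity on $\pi_0$, which in turn is equivalent to $j\colon X\hookrightarrow M_f$ being a weak equivalence. Your forward direction is exactly this argument fleshed out (including the key observation that the boundary adjustment in $M_f$ becomes rel $\partial I^n$ after applying $r$), and your backward direction, while argued directly on homotopy groups rather than through $M_f$, is an entirely standard and equivalent unpacking.
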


The statement, in a slightly more general form, can be found in \cite[Theorem 11.12]{Bredon}. The proof amounts to first noting that $f:X\to Y$ is a weak homotopy equivalence if and only if the inclusion $i:X\to M_f$ is a weak homotopy equivalence, where $M_f$ denotes the mapping cylinder of $f$. Then since the above condition on $f$ is equivalent to the condition that $\pi_n(M_f,X)=0$ for all $n\geq 1$ and $\pi_0(X)\to \pi_0(M_f)$ is onto, the lemma follows.

If $L\subseteq K$ is a subcomplex of the simplicial complex $K$, then $|K|\times \{0\}\cup |L|\times I$ is a retract of $|K|\times I$. The following lemma is a slightly sharper statement of this fact, where the condition placed on the retraction follows directly from standard proofs, for example \cite[Proposition 0.16]{Hatcher}.

\begin{lemma}\label{hep}
Let $K$ be a simplicial complex and let $L\subseteq K$ be a subcomplex. Let $i:|K|\times\{0\}\cup |L|\times I\to |K|\times I$ denote the inclusion. There exists a retraction $r:|K|\times I\to |K|\times\{0\}\cup |L|\times I$ such that $i\circ r(|\sigma|\times I)\subseteq |\sigma| \times I$ for all $\sigma\in K$.
\end{lemma}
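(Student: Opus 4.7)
The plan is to build $r$ by induction over the skeleta of $K$ modulo $L$, using on each new simplex the standard apex-projection retraction of $\Delta^n\times I$ onto $\Delta^n\times\{0\}\cup\partial\Delta^n\times I$, and to observe that this cellular construction automatically preserves simplices. Write $A := (|K|\times\{0\})\cup(|L|\times I)$ and $K_n := L\cup K^{(n)}$, so that $|K_{-1}|=|L|$ and $|K|=\bigcup_n|K_n|$. Define $r$ to be the identity on $A$; I will extend $r$ over $|K_n|\times I$ by induction on $n$.

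For the inductive step, suppose $r$ has been defined continuously on $A\cup(|K_{n-1}|\times I)$ with $r(|\tau|\times I)\subseteq|\tau|\times I$ for every $\tau\in K_{n-1}$. For each $n$-simplex $\sigma\in K^{(n)}\setminus L$, set $B_\sigma:=(|\sigma|\times\{0\})\cup(|\partial\sigma|\times I)$, which lies in $A\cup(|K_{n-1}|\times I)$, so $r$ is already defined on $B_\sigma$. Identifying $|\sigma|$ with the standard $n$-simplex via an affine homeomorphism, project radially from the apex $(b_\sigma,2)\in|\sigma|\times\bbr$, where $b_\sigma$ is the barycenter of $|\sigma|$, to obtain a continuous retraction $\rho_\sigma:|\sigma|\times I\to B_\sigma$ fixing $B_\sigma$ pointwise. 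Define $r$ on $|\sigma|\times I$ by $r|_{B_\sigma}\circ\rho_\sigma$. Carrying this out simultaneously for all $n$-simplices of $K^{(n)}\setminus L$ extends $r$ to $|K_n|\times I$; the definitions are consistent on overlaps because two distinct $n$-simplices meet only in common faces of dimension strictly less than $n$, which lie in $|K_{n-1}|$, where the previously defined $r$ governs.

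To check the simplex-preserving property: if $\sigma\in L$, then $|\sigma|\times I\subseteq A$ and $r$ acts as the identity; if $\sigma\notin L$, then $\rho_\sigma(|\sigma|\times I)\subseteq B_\sigma$, and $r$ maps $|\sigma|\times\{0\}$ into itself and each $|\tau|\times I$ for $\tau\in\partial\sigma$ into $|\tau|\times I\subseteq|\sigma|\times I$ by the inductive hypothesis, so $r(|\sigma|\times I)\subseteq|\sigma|\times I$. For global continuity of $r:|K|\times I\to A$, use that $I$ is locally compact Hausdorff, so $|K|\times I$ carries the product CW topology; a function out of $|K|\times I$ is then continuous if and only if its restriction to each $|\sigma|\times I$ is continuous. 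Each such restriction is continuous by construction.

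The only delicate point I expect is the consistency of the cell-by-cell definition across skeletal stages. This is handled by the fact that each $\rho_\sigma$ restricts to the identity on $|\partial\sigma|\times I\cup|\sigma|\times\{0\}$, so the inductively prescribed values on boundaries are preserved; combined with the CW-topology criterion above, this gives a single continuous retraction with the required property.
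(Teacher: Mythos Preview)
Your proposal is correct and is precisely the standard argument the paper has in mind: the paper does not give its own proof but simply notes that the simplex-preserving condition ``follows directly from standard proofs, for example \cite[Proposition 0.16]{Hatcher},'' which is exactly the skeleton-by-skeleton apex-projection construction you have written out. Your treatment of the consistency on overlaps, the inductive verification of $r(|\sigma|\times I)\subseteq|\sigma|\times I$, and the appeal to the CW topology on $|K|\times I$ via local compactness of $I$ are all sound.
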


\begin{definition}
If $K$ is a simplicial complex and $\scra$ is a cover of $|K|$, we say that $K$ is \textit{subordinate to $\scra$} if for every simplex $\sigma\in K$, $|\sigma|$ is contained in some element of $\scra$. Additionally, given a cover $\scra$ of a space $X$, we say that a triangulation $T:|K|\to X$ is \textit{subordinate to $\scra$} if $T(|\sigma|)$ is contained in some element of $\scra$ for every simplex $\sigma\in K$.
\end{definition}

To prove that the bijection $\mgv\to\vm$ satisfies the condition of \Cref{weak-lemma}, we will show that for any map $f:I^n\to \vm$ such that $f|_{\p I^n}$ is continuous as a map $f|_{\p I^n}:\p I^n\to \mgv$, there exists a (free) homotopy $H:I^n\times I\to \vm$ between $f$ and a map $f': I^n\to \mgv$, and where $H$ satisfies the condition that $H|_{\p I^n\times I}:\p I^n\times I\to \mgv$ is continuous. Then, as we will describe in the proof of \Cref{lifting}, $H$ will induce a homotopy rel. $\p I^n$ between $f$ and a map $f'':I^n\to \mgv$.

The construction of the homotopy $H$ consists of two main steps. For the first step, we homotope $f$ to a map $g:I^n\to \vm$ such that there exists a triangulation of $I^n$ subordinate to $g^{-1}M_\scru$. This would be unnecessary if $M_\scru$ were an open cover of $\vm$, however, we remind the reader that this is not the case in general. Thus we instead start with a triangulation of $I^n$ subordinate to $f^{-1}\wt M_\scru$ (since $\wt M_\scru$ \textit{is} an open cover of $\vm$) and carefully deform $f$ to a map $g$ which carries each simplex of the triangulation into an element of $M_\scru$.

\begin{lemma}\label{injpart1}
Let $n\in\bbn$ and let $f:I^n\to \mcv^m(\scru)$ be a map such that $f|_{\p I^n}$ is continuous with respect to the topology of $\mgv$. There exists a homotopy $H:I^n\times I\to \mcv^m(\scru)$ between $f$ and a map $g$ such that 
\begin{enumerate}
\item $H|_{\p I^n\times I}$ is continuous with respect to the topology of $\mgv$,
\item $I^n$ admits a triangulation subordinate to $g^{-1}M_\scru$.
\end{enumerate}
\end{lemma}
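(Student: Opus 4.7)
The plan is to construct $H$ by first choosing a sufficiently fine triangulation $T:|K|\to I^n$ of Freudenthal--Kuhn type, then inductively homotoping $f$ across the skeleta of $K$ using \Cref{deform}, and gluing the simplex-wise homotopies via \Cref{hep}. To set up, first fix $N=\alpha(n)$ from \Cref{tri}, choose $p\in (1-1/N,1)$, and apply \Cref{cover} to obtain the open cover $\wt M_\scru$ of $\vm$. Since $f$ is continuous and $I^n$ is compact, the open cover $f^{-1}\wt M_\scru$ admits a Lebesgue number $\delta>0$. Apply \Cref{tri} again to obtain a triangulation $T:|K|\to I^n$ with $\diam T(|\sigma|)<\delta$ for every $\sigma\in K$ and with vertex-star bound $N$; since $T$ comes from the Freudenthal--Kuhn subdivision of $[0,p]^n$, $\p I^n$ is the image of a finite subcomplex $L\subseteq K$. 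For each top-dimensional $\sigma\in K$ choose $U_\sigma\in\scru$ with $f(T(|\sigma|))\subseteq \wt M_{U_\sigma}$, and for a general simplex $\tau\in K$ set $U_\tau^*=\bigcap_{\text{top }\sigma\supseteq\tau}U_\sigma$, an intersection of at most $N$ elements of $\scru$; note that $\tau\subseteq\tau'$ implies $U_\tau^*\subseteq U_{\tau'}^*$.

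I then build homotopies $H_k:I^n\times I\to\vm$ with $H_k(x,0)=f(x)$ so that the endpoint $f_k:=H_k(\cdot,1)$ carries $T(|\sigma|)$ into $M_{V_\sigma}$ for each $j$-simplex $\sigma\in K$ with $j\leq k$, where $V_\sigma\subseteq U_\sigma^*$ is open with $d(V_\sigma,(U_\sigma^*)^C)>0$, and in addition $H_k(T(|\sigma|)\times I)\subseteq \bigcap_{\text{top }\rho\supseteq\sigma}\wt M_{U_\rho}$ for every $\sigma\in K$. The base case $k=-1$ is the constant homotopy at $f$. For the inductive step, on each $(k+1)$-simplex $\sigma$ the hypothesis places $f_k\circ T(\p\sigma)$ in $M_V$ for $V=\bigcup_{\tau\in\p\sigma}V_\tau$; monotonicity of $\tau\mapsto U_\tau^*$ gives $V\subseteq U_\sigma^*$ and $d(V,(U_\sigma^*)^C)>0$, so \Cref{deform} yields a homotopy $G_\sigma:|\sigma|\times I\to \bigcap_{\text{top }\rho\supseteq\sigma}\wt M_{U_\rho}$ rel $\p\sigma$ ending in some $M_{V_\sigma}$. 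Gluing these $G_\sigma$ (constantly on $|K^{(k)}|$) produces a homotopy on $|K^{(k+1)}|\times I$, which I extend to $|K|\times I$ via the retraction $r$ of \Cref{hep}; the simplex-preserving property $r(|\rho|\times I)\subseteq|\rho|\times I$ forces the extension to retain the ambient containment in $\bigcap_{\text{top }\rho\supseteq\sigma}\wt M_{U_\rho}$ on every simplex, so concatenation with $H_k$ produces $H_{k+1}$. After finitely many steps $g:=f_n$ carries each simplex $\sigma$ into $M_{V_\sigma}\subseteq M_{U_\sigma^*}\subseteq M_{U_\rho}$ for any top-dimensional $\rho\supseteq\sigma$, so $T$ is subordinate to $g^{-1}M_\scru$.

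The step I expect to be the main obstacle is maintaining $|\cdot|_m$-continuity of $H|_{\p I^n\times I}$ throughout the induction. The simplex-wise homotopies $G_\sigma$ inherit $|\cdot|_m$-continuity from their inputs by the ``moreover'' clause of \Cref{deform} together with \Cref{CW-linear} and \Cref{CW-cont}, but the gluing uses the retraction $r:|K|\times I\to|K|\times\{0\}\cup|K^{(k+1)}|\times I$, which is a priori only CW-continuous. Preserving $|\cdot|_m$-continuity on $|L|\times I$ after composing with $r$ requires $r|_{|L|\times I}$ to be continuous in the $|\cdot|_m$-topology, and here it is crucial that the Freudenthal--Kuhn construction makes $K$ and hence $L$ a \emph{finite} simplicial complex: its CW and metric topologies on $|L|$ then coincide, so $r|_{|L|\times I}$ is automatically $|\cdot|_m$-continuous. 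Since concatenations of $|\cdot|_m$-continuous maps are $|\cdot|_m$-continuous, $H_n|_{\p I^n\times I}$ is continuous as a map into $\mgv$, and $H:=H_n$ with $g:=f_n$ satisfies both conclusions of the lemma.
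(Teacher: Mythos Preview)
Your proof is correct and follows essentially the same route as the paper: Freudenthal--Kuhn triangulation subordinate to $f^{-1}\wt M_\scru$ with vertex-star bound $\alpha(n)$, the cover $\wt M_\scru$ built with $p>1-1/\alpha(n)$, induction over skeleta via \Cref{deform}, and extension across $|K|\times I$ by the simplex-preserving retraction of \Cref{hep}; your inductive invariants coincide with the paper's conditions (b) and (c).

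One correction to your final paragraph: you misidentify the obstacle for $\mgv$-continuity on $\p I^n\times I$. There is no CW-versus-metric issue on the \emph{domain}---$K$ triangulates the compact set $I^n$ and is automatically finite, so $r$ is just a continuous map between Euclidean subsets; finiteness of $K$ is not the crux. The genuine point (which the paper makes explicit and which you have the ingredients for) is that the simplex-preserving property $r(|\sigma|\times I)\subseteq|\sigma|\times I$ forces $r(\p I^n\times I)$ to land in the set $A=\p I^n\times I\cap\bigl(|K|\times\{0\}\cup|K^{(k+1)}|\times I\bigr)$, and it is precisely on $A$ that $f_k\cup H'$ is known to be $\mgv$-continuous---by the inductive hypothesis on $f_k|_{\p I^n}$ together with the ``moreover'' clause of \Cref{deform} applied only to those $(k+1)$-simplices lying in $\p I^n$. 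You already invoked simplex-preservation for the ambient $\wt M$-containment; it does the same work here.
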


\begin{proof}
Let $\alpha(n)$ be the integer guaranteed to exist by \Cref{tri} and let $1-1/\alpha(n)<p<1$. Let $\wt M_\scru$ be the open cover of $\mcv^m(\scru)$ from \Cref{cover} obtained with respect to the value $p$. By the Lebesgue number lemma, there exists $\epsilon>0$ such that if $A$ is a subset of $I^n$ with $\diam A<\epsilon$, then $f(A)$ is contained in some element of $\wt M_\scru$. By the construction of the integer $\alpha(n)$, there exists a triangulation $T:|K|\to I^n$ such $\diam T(|\sigma|)<\epsilon$ for all $\sigma\in K$, hence $T$ is subordinate to $f^{-1}\wt M_\scru$, and such that each vertex of $K$ is contained in at most $\alpha(n)$ many $n$-simplices of $K$. Note that this implies that each simplex of $K$ is contained in at most $\alpha(n)$ many $n$-simplices of $K$. For convenience, we identify $I^n$ with $|K|$ so that $f$ is a map $f:|K|\to \mcv^m(\scru)$. 

Recall that $K_n$ is the set of all $n$-simplices of $K$, and fix a function $\ell:K_n\to \scru$ which assigns to each $n$-simplex $\sigma$ an open set $U\in \scru$ such that $f(\sigma)\subseteq \wt M_U$. For each simplex $\sigma\in K$, let $\mcs_\sigma$ be the set of $n$-simplices in $K$ which contain $\sigma$. By construction, $\card(\mcs_\sigma)\leq \alpha(n)$ for all $\sigma\in K$ with $\sigma\neq\emptyset$.

We produce a homotopy $H$ between $f$ and a map $g$ by induction on the $k$-skeleton of $K$. So let $k<n$ and suppose there exists a homotopy $H_k$ between $f$ and a map $f_k:|K|\to \mcv^m(\scru)$ such that 
\begin{enumerate}[label=(\alph*)]
\item $H_k|_{\p I^n\times I}$ is continuous  with respect to the topology of $\mgv$,
\item for every $\sigma\in K$, $f_k(|\sigma|)\subseteq \bigcap_{U\in\ell(\mcs_\sigma)}\wt M_U$,
\item $f_k$ satisfies the condition that for each $\sigma\in K^{(k)}$, if $U_\sigma$ denotes the open set $U_\sigma=\bigcap\ell(\mcs_\sigma)$, there exists an open set $V_\sigma\subseteq U_\sigma$ such that $d(V_\sigma, U_\sigma^C)>0$ and $f_k(|\sigma|)\subseteq M_{V_\sigma}$. 
\end{enumerate}
Note that condition (c) states that the $k$-skeleton of $K$ is subordinate to the cover $f_k^{-1}M_\scru$, but in a slightly stronger form which allows \Cref{deform} to be applied to the $(k+1)$-simplices of $K$ in the induction step. We also remark that the sets $U_\sigma$ are non-empty for all $\sigma\neq\emptyset$. This is a consequence of the fact that for $\sigma\neq\emptyset$, $\bigcap_{U\in\ell(\mcs_\sigma)}\wt M_U$ is non-empty and each $\wt M_U$ has $\MCP(p,U)$ for $p>1-1/\alpha(n)$, hence (recalling the beginning of the proof of \Cref{deform}) it follows that $\bigcap_{U\in\ell(\mcs_\sigma)}\wt M_U$ has $\MCP(q, U_\sigma)$ for some $q>0$.

To begin the induction step, let $\sigma\in K_{k+1}$ and let $V_{\p\sigma}$ denote the open set $\bigcup_{\tau\in\p\sigma}V_\tau$ where each $V_\tau$ is the open set guaranteed to exist by (c) for the $k$-simplex $\tau\in \p \sigma$. Then $d(V_{\p\sigma}, U_\sigma^C)>0$. Since $\wt M_\scru$ was constructed with respect to the value $p>1-1/\alpha(n)$ and $\ell(\mcs_\sigma)$ is a subset of $\scru$ with at most $\alpha(n)$ elements, we may apply \Cref{deform} to the map
$$f_k|_{|\sigma|}:(\Delta^{k+1},\p\Delta^{k+1})\to \Big(\bigcap_{U\in \ell(\mcs_\sigma)}\wt M_U, M_{V_{\p\sigma}}\Big).$$
Hence $f_k|_{|\sigma|}$ is homotopic rel.\ $\p \Delta^{k+1}$ by a homotopy $H_\sigma':\Delta^{k+1}\times I\to \bigcap_{U\in\ell(\mcs_\sigma)}\wt M_U$ to a map $f_\sigma'$ with image in $M_{V_\sigma}$ where $V_\sigma\subseteq U_\sigma$ satisfies $d(V_\sigma, U_\sigma)>0$. Moreover, if $\sigma\subseteq \p I^n$, then since $f_k|_{|\sigma|}$ is continuous with respect to the topology of $\mgv$, we may assume $H'_\sigma$ is as well. Note that condition (b) of the induction hypothesis is needed to ensure that $f_k|_{|\sigma|}$ has codomain $\bigcap_{U\in \ell(\mcs_\sigma)}\wt M_U$ so that \Cref{deform} can be applied. Let $f':|K^{(k+1)}|\to \mcv^m(\scru)$ be the map defined by $f'|_{|\sigma|}=f'_\sigma$ for all $\sigma\in K_{k+1}$, and let $H'$ be the homotopy between $f_k|_{|K^{(k+1)}|}$ and $f'$ given by $H'|_{|\sigma|\times I}=H'_\sigma$. By \Cref{hep} there exists a retraction $r:|K|\times I\to |K|\times\{0\}\cup |K^{(k+1)}|\times I$ such that $i\circ r(|\sigma|\times I)\subseteq |\sigma| \times I$ for all $\sigma\in K$. Let $f_k\cup H'$ denote the map $|K|\times\{0\}\cup |K^{(k+1)}|\times I\to \mcv^m(\scru)$ such that $f_k\cup H'|_{|K|\times\{0\}}=f_k$ and $f_k\cup H'|_{|K^{(k+1)}|\times I}=H'$. Then $G=(f_k\cup  H')\circ r:|K|\times I\to \mcv^m(\scru)$ defines a homotopy between $f_k$ and the map $f_{k+1}$ defined by $f_{k+1}=G(\cdot, 1)$. Let $H_{k+1}=H_k\cdot G$ be the concatenation of $H_k$ and $G$. Then $H_{k+1}$ is a homotopy between $f$ and $f_{k+1}$ and we now verify that
\begin{enumerate}[label=(\alph*)]
\item $H_{k+1}|_{\p I^n\times I}$ is continuous  with respect to the topology of $\mgv$,
\item for every $\sigma\in K$, $f_{k+1}(|\sigma|)\subseteq \bigcap_{U\in\ell(\mcs_\sigma)}\wt M_U$,
\item $f_{k+1}$ satisfies the condition that for each $\sigma\in K^{(k+1)}$, if $U_\sigma$ denotes the open set $U_\sigma=\bigcap\ell(\mcs_\sigma)$, there exists an open set $V_\sigma\subseteq U_\sigma$ such that $d(V_\sigma, U_\sigma^C)>0$ and $f_{k+1}(|\sigma|)\subseteq M_{V_\sigma}$. 
\end{enumerate}

First, recall that $f_k|_{|\sigma|}$ is continuous relative to $\mgv$ for all $\sigma\in K$ such that $|\sigma|\subseteq \p I^n$, and $H_\sigma'$ is continuous relative to $\mgv$ for all $k+1$ simplices $\sigma$ such that $|\sigma|\subseteq \p I^n$. Then $f_k\cup H'$ restricted to the set
$$A= \p I^n\times I \cap \Big(|K|\times\{0\}\cup|K^{(k+1)}|\times I\Big)$$
is continuous relative to $\mgv$. Since $r(\p I^n\times I)\subseteq A$ holds due to the fact that $i\circ r(|\sigma|\times I)\subseteq |\sigma|\times I$ for all $\sigma\in K$, we see that $G|_{\p I^n\times I}$ is continuous relative to $\mgv$. This implies (a). 

To see that (b) holds, let $\sigma\in K$. If $\sigma\in K^{(k)}$, then $f_{k+1}|_{|\sigma|}=f_k|_{|\sigma|}$ and we are done by the induction hypothesis. If $\sigma\in K^{(k+1)}$, then
$$f_{k+1}(|\sigma|)\subseteq H_{\sigma}'(|\sigma|\times I)\subseteq \bigcap_{U\in\ell(\mcs_\sigma)}\wt M_U.$$
Lastly, suppose $\sigma\in K\setminus K^{(k+1)}$. Then $f_{k+1}(|\sigma|)\subseteq G(|\sigma|\times I)$, which is in turn contained in
$$f_k(|\sigma|)\cup H'\big(|\sigma^{(k+1)}|\times I\big)$$
by the fact that $G=(f\cup H')\circ r$ and $i\circ r(|\sigma|\times I)\subseteq |\sigma|\times I$ for all $\sigma\in K$. Observe that $f_k(|\sigma|)\subseteq \bigcap_{U\in\ell(\mcs_\sigma)}\wt M_U$ and 
$$H'\big(|\sigma^{(k+1)}|\times I\big)\subseteq\bigcup_{\tau\in\sigma_{k+1}}\bigcap_{U\in\ell(\mcs_\tau)}\wt M_U\subseteq \bigcap_{U\in\ell(\mcs_\sigma)}\wt M_U$$
where $\tau\in\sigma_{k+1}$ denotes that $\tau$ is a $(k+1)$-simplex of $\sigma$, and the last subset inclusion follows from the fact that if $\tau\subseteq \sigma$, then $\ell(\mcs_\sigma)\subseteq\ell(\mcs_\tau)$, and so $\bigcap_{U\in\ell(\mcs_\tau)}\wt M_U\subseteq \bigcap_{U\in\ell(\mcs_\sigma)}\wt M_U$. Hence (b) follows.

Lastly, (c) follows directly by the construction of $f_{k+1}$. Thus we have completed the induction step.

The base case is satisfied trivially by starting induction at $k=-1$, setting $f_{-1}=f$, and letting $H_{-1}$ be the trivial homotopy $H_{-1}(x,t)=f(x)$. Here we use the convention that $K^{(-1)}=\{\emptyset\}$ and $d(\emptyset, A)=\infty$ for any subset $A\subseteq X$ so that condition (c) is satisfied by setting $V_\sigma=\emptyset$ for $\sigma=\emptyset\in K^{(-1)}$.

Hence by induction there exists a homotopy $H=H_n$ between $f$ and a map $g=f_n$ which satisfies conditions (1) and (2) in the statement of the lemma.
\end{proof}

\begin{lemma}\label{second}
Let $n\in\bbn$ and let $f:I^n\to\mcv^m(\scru)$ be a map such that $f|_{\p I^n}$ is continuous with respect to the topology of $\mgv$. If $I^n$ admits a triangulation subordinate to $f^{-1}M_\scru$, then there exists a homotopy $H:I^n\times I\to \vm$ between $f$ and a map $g:I^n\to \mgv$ such that $H|_{\p I^n\times I}$ is continuous with respect to the topology of $\mgv$.
\end{lemma}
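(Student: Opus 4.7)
The plan is to produce $g$ by affine interpolation from the vertex values of a triangulation. Let $T\colon |K|\to I^n$ be a triangulation subordinate to $f^{-1}M_\scru$; since $I^n$ is compact, $K$ is finite, and for each $\sigma\in K$ there is some $U_\sigma\in\scru$ with $f(T(|\sigma|))\subseteq M_{U_\sigma}$. For each vertex $v\in K_0$, set $\mu_v=f(T(v))\in\vm$, and define $g\colon I^n\to\vm$ by the rule that on $T(|\sigma|)$,
$$g\bigl(T\bigl({\textstyle\sum}_i t_i v_i\bigr)\bigr)={\textstyle\sum}_i t_i\mu_{v_i},$$
where $(t_i)$ are barycentric coordinates on $\sigma$. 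This is unambiguous on common faces and, for $x\in T(|\sigma|)$, yields a measure supported in $U_\sigma$, so $g$ indeed lands in $\vm$.

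Next I would verify that $g$ is continuous as a map into $\mgv$. By \Cref{coord}, it suffices to show $\psi_v\circ g\colon I^n\to I$ is continuous for each vertex $v\in X$ of $\vu$. On each closed piece $T(|\sigma|)$, the composition $(\psi_v\circ g)(T(\sum_i t_iv_i))=\sum_i t_i\psi_v(\mu_{v_i})$ is an affine function of the barycentric coordinates and therefore continuous. Since $K$ is finite, continuity on each $T(|\sigma|)$ assembles into continuity on all of $I^n$.

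Finally I would use the linear homotopy $H(x,t)=(1-t)f(x)+tg(x)$. For $x\in T(|\sigma|)$, both $f(x)$ and $g(x)$ have support in $U_\sigma$, so $H(x,t)$ has support in $U_\sigma$ and lies in $\vm$, making $H$ well defined as a map into $\vm$; continuity then follows from \Cref{linear} (using that $g$ is continuous into $\vm$ via \Cref{incl}). On the boundary, $f|_{\p I^n}$ is continuous into $\mgv$ by hypothesis, $g|_{\p I^n}$ is the restriction of the globally $\mgv$-continuous map $g$, and so \Cref{CW-linear} gives that $H|_{\p I^n\times I}$ is continuous into $\mgv$. The only real care needed is the support bookkeeping in the first step---ensuring that for each $\sigma$ all the vertex measures $\mu_{v_i}$ lie in a common $M_{U_\sigma}$ so that the affine combinations remain in $\vm$---which is immediate from the subordination of $T$ to $f^{-1}M_\scru$.
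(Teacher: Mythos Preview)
Your proposal is correct and follows essentially the same route as the paper: define $g$ by affine interpolation of the vertex values $f(T(v))$ over each simplex of the triangulation, use subordination to $f^{-1}M_\scru$ to see the resulting measures land in some $M_{U_\sigma}\subseteq\vm$, and take the linear homotopy, invoking \Cref{linear} for continuity into $\vm$ and \Cref{CW-linear} for the boundary continuity into $\mgv$. The only cosmetic differences are that the paper argues $g_\sigma$ is $\mgv$-continuous by noting it is an affine map into a single simplex of $\vu$ (rather than via \Cref{coord}), and checks the boundary continuity simplex-by-simplex rather than globally on $\p I^n$; both are equivalent.
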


\begin{proof}
Suppose $T:|K|\to I^n$ is a triangulation subordinate to $f^{-1}M_\scru$. To simplify notation, for any simplex $\sigma\in K$, we identify $|\sigma|$ with its image in $I^n$ under $T$. Given an $n$-simplex $\sigma\in K$, write $\sigma=[v_0, \dots, v_n]$. Express each point of $x\in |\sigma|$ in barycentric coordinates by writing $x=\sum_{i=0}^nx_iv_i$. Let $g_\sigma:\Delta^n\to \mcv^m(\scru)$ be the map defined by $g_\sigma(\sum_{i=0}^nx_iv_i)=\sum x_if(v_i)$. For each $n$-simplex $\sigma\in K$, $g_\sigma$ is continuous with respect to the topology of $\mgv$ because $g_\sigma$ is an affine mapping of $\Delta^n$ into the simplex of $\mgv$ with vertex set $\bigcup_i\supp f(v_i)$. Note that $\bigcup_i\supp f(v_i)$ does in fact define a simplex of $\vu$ due to the fact that $T$ is subordinate to $f^{-1}M_\scru$, which implies that $f(|\sigma|)\subseteq M_U$ for some $U\in\scru$, and so $\bigcup_i\supp f(v_i)\subseteq U$. The continuity of $\id:\mgv\to\vm$ implies that $g_\sigma$ is also continuous with respect to the topology of $\vm$. 

For all $\sigma\in K_n$, let $H_\sigma:\Delta^n\times I\to \mcv^m(\scru)$ be the linear homotopy between $f|_{|\sigma|}$ and $g_\sigma$. Since $T$ is subordinate to $f^{-1}M_\scru$, for each $\sigma\in K$, there exists $U_\sigma\in\scru$ such that $f(x)\in M_{U_\sigma}$ for all $x\in |\sigma|$. Then for any $t\in I$, we have that 
$$H_\sigma(x,t)=(1-t)f(x)+tg(x)=(1-t)f(x)+t\sum_{i=0}^n x_if(v_i)\in M_{U_\sigma}.$$
Hence $H_\sigma$ is well-defined, and is thus continuous by \Cref{linear}. It is straightforward to see that if $\sigma\cap\sigma'=\tau$, then $H_\sigma|_{|\tau|}=H_{\sigma'}|_{|\tau|}$. So let $H:I^n\times I\to \mcv^m(\scru)$ be the map defined by $H|_{|\sigma|}=H_\sigma$ for all $\sigma\in K_n$. Then $H$ is a homotopy between $f$ and the map $g:I^n\to \mcv^m(\scru)$ satisfying $g|_{|\sigma|}=g_\sigma$. Moreover, if $\tau\in K$ satisfies $|\tau|\subseteq \p I^n$, then $f|_{|\tau|}$ is continuous with respect to the topology of $\mgv$. Since $g|_{|\tau|}$ is continuous with respect to the topology of $\mgv$, so too is $H|_{|\tau|\times I}$ by \Cref{CW-linear}. Hence $H|_{\p I^n\times I}$  is continuous with respect to the topology of $\mgv$.
\end{proof}

\begin{lemma}\label{lifting}
Let $f:I^n\to \vm$ be a map such that $f|_{\p I^n}$ is continuous with respect to the topology of $\mgv$. Then $f$ is homotopic rel.\ $\p I^n$ to a map $g:I^n\to \mgv$.
\end{lemma}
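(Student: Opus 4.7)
The plan is to combine \Cref{injpart1} and \Cref{second} to produce a free homotopy from $f$ to a map into $\mgv$, then convert this free homotopy into a homotopy rel.\ $\p I^n$ by two applications of the homotopy extension property (HEP) for CW pairs.

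First, \Cref{injpart1} produces a homotopy $H_1\colon I^n\times I\to \vm$ from $f$ to some $g_1\colon I^n\to \vm$ such that $H_1|_{\p I^n\times I}$ is continuous into $\mgv$ and $I^n$ admits a triangulation subordinate to $g_1^{-1}M_\scru$. The restriction $g_1|_{\p I^n}=H_1(\cdot,1)|_{\p I^n}$ is then continuous into $\mgv$, so \Cref{second} applies to $g_1$ and yields a homotopy $H_2\colon I^n\times I\to \vm$ from $g_1$ to some $g\colon I^n\to \mgv$, again with $H_2|_{\p I^n\times I}$ continuous into $\mgv$. The concatenation $H := H_1\cdot H_2\colon I^n\times I\to \vm$ runs from $f$ to $g$, with $h := H|_{\p I^n\times I}\colon \p I^n\times I\to \mgv$ continuous.

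To convert $H$ into a homotopy rel.\ $\p I^n$, I would first apply HEP for $(I^n,\p I^n)$ with target $\mgv$ to extend the reversed boundary homotopy $h^{-1}(x,t):=h(x,1-t)$ (which starts at $g|_{\p I^n}$) to a homotopy $G\colon I^n\times I\to \mgv$ starting at $g$. The concatenation $F := H\cdot G\colon I^n\times I\to \vm$ then runs from $f$ to $g'' := G_1\colon I^n\to \mgv$, and has $F|_{\p I^n\times I}(x,t)=h(x,2\min(t,1-t))$; this out-and-back boundary loop equals $f(x)$ at both $t=0$ and $t=1$, so in particular $g''|_{\p I^n}=f|_{\p I^n}$. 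The loop admits the canonical nullhomotopy $J\colon \p I^n\times I\times I\to \mgv$ defined by $J(x,t,s)=h(x,2\min(t,1-t)(1-s))$, which starts at $F|_{\p I^n\times I}$, terminates at the constant $f|_{\p I^n}$, and is constant in $s$ at $t\in\{0,1\}$. On the subspace
$$A = (I^n\times I\times\{0\})\cup (\p(I^n\times I)\times I)\subseteq I^n\times I\times I,$$
I would define a continuous map into $\vm$ by gluing $F$ at $s=0$, $J$ on $\p I^n\times I\times I$, the constant $f$ on $I^n\times\{0\}\times I$, and the constant $g''$ on $I^n\times\{1\}\times I$; consistency on overlaps follows from $g''|_{\p I^n}=f|_{\p I^n}$ together with the endpoint properties of $J$. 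Applying HEP for the CW pair $(I^n\times I,\p(I^n\times I))$ extends this to $\tilde F\colon I^n\times I\times I\to \vm$, and the slice $\bar H(x,t):=\tilde F(x,t,1)$ is the desired homotopy rel.\ $\p I^n$ from $f$ to $g''$.

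The main obstacle is essentially bookkeeping: the nullhomotopy $J$ must be engineered so that it is constant in $s$ at $t\in\{0,1\}$ (ensuring the final slice $\bar H$ still interpolates between $f$ and $g''$ rather than deforming these endpoints), and one must verify the consistency of the gluing on $A$ so that HEP applies cleanly. Conceptually the argument is the standard cofibration-theoretic packaging of a free homotopy into a relative one, and the genuine content of the proof of \Cref{mainthm} lives in the earlier \Cref{injpart1}, \Cref{second}, and \Cref{deform}.
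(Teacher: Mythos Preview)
Your proposal is correct. The overall architecture matches the paper's proof exactly: apply \Cref{injpart1} to reach a map admitting a triangulation subordinate to the pullback of $M_\scru$, then apply \Cref{second} to reach a map into $\mgv$, and finally convert the resulting free homotopy (whose restriction to $\p I^n\times I$ lands in $\mgv$) into a homotopy rel.\ $\p I^n$.

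The only difference is in that last conversion step. The paper does it in one stroke by a geometric reparametrization: writing $G$ for the concatenated homotopy and $A=\p I^n\times I\cup I^n\times\{1\}$, it picks a homeomorphism $\phi\colon A\to I^n\times\{0\}$ fixing $\p I^n\times\{0\}$, sets $g=G|_A\circ\phi^{-1}$, and observes that $G$ itself (after a reparametrization of the cylinder that pushes $I^n\times\{1\}$ onto $A$ while fixing $I^n\times\{0\}$ and $\p I^n\times\{0\}$) furnishes the homotopy rel.\ $\p I^n$ from $f$ to $g$. Continuity of $g$ into $\mgv$ is immediate since $G|_A$ is continuous into $\mgv$. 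Your route instead invokes the HEP twice: once to unwind the boundary homotopy and once to straighten the resulting out-and-back loop. This is entirely valid and is, as you say, the standard cofibration-theoretic packaging; the paper's box trick is simply a shorter, more hands-on realization of the same idea that avoids the second HEP layer and the bookkeeping around the nullhomotopy $J$.
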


\begin{proof}
Suppose $f:I^n\to \vm$ is a map such that $f|_{\p I^n}$ is continuous with respect to the topology of $\mgv$. We may then apply \Cref{injpart1} to find a homotopy $H:I^{n}\times I\to \mcv^m(\scru)$ between $f$ and a map $f':I^n\to \vm$ such that $H|_{\p I^{n}\times I}$ is continuous with respect to the topology of $\mgv$ and $I^{n}$ admits a triangulation subordinate to $f'^{-1}M_\scru$.

We have that $f'|_{\p I^{n+1}}$ is continuous with respect to the topology of $\mgv$ since $H|_{\p I^{n}\times I}$ is. Hence we may apply \Cref{second} to obtain a homotopy $H':I^{n}\times I\to \mcv^m(\scru)$ between $f'$ and a map $f'':I^{n}\to \mgv$ such that $H'|_{\p I^n\times I}$ is continuous relative to $\mgv$. Let $G:I^{n}\times I\to\mcv^m(\scru)$ be the concatenation of $G=H\cdot H'$. Let $A=\p I^{n}\times I\cup I^{n}\times \{1\}$ and let $\phi:A\to I^{n}\times \{0\}$ be a homeomorphism which fixes $\p I^{n}\times\{0\}$. Finally, let $g=G|_A\circ \phi^{-1}$. Then $G$ induces a homotopy rel.\ $\p I^n$ between $f$ and $g$. Because $G|_{\p I^n\times I}$ and $f''$ are both continuous with respect to the topology of $\mgv$, so too is $g$.
\end{proof}

We now prove our main result.

\begin{recallthm}
For any uniformly bounded open cover $\scru$ of a metric space $X$, the natural bijection $\id:\gv\to \mcv^m(\scru)$ is a weak homotopy equivalence.
\end{recallthm}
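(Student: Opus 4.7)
The plan is to deduce the theorem quickly by combining the standard factorization $\gv \to \mgv \to \vm$ with the characterization of weak homotopy equivalences in \Cref{weak-lemma} and the key technical result \Cref{lifting}. Recall from the discussion preceding \Cref{coord} that $\id : \gv \to \mgv$ is always a homotopy equivalence, and under the uniform boundedness assumption \Cref{incl} gives that $\id : \mgv \to \vm$ is continuous. Since a homotopy equivalence is a weak homotopy equivalence and weak homotopy equivalences compose, it suffices to prove that $\id : \mgv \to \vm$ is a weak homotopy equivalence.

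To verify that, I would invoke \Cref{weak-lemma}. Consider an arbitrary commutative square
$$\begin{tikzcd}
\p I^n \arrow[r, "\alpha"] \arrow[d, hook, "i"] & \mgv \arrow[d, "\id"] \\
I^n \arrow[r, "\beta"]                          & \vm
\end{tikzcd}$$
I must produce $\phi : I^n \to \mgv$ with $\phi \circ i = \alpha$ and $\id \circ \phi \simeq \beta$ rel.\ $\p I^n$. Since the square commutes, the map $\beta : I^n \to \vm$ satisfies $\beta|_{\p I^n} = \alpha$, and $\alpha$ is by assumption continuous into $\mgv$. Hence $\beta$ is a map to which \Cref{lifting} applies directly: there is a homotopy $G : I^n \times I \to \vm$ rel.\ $\p I^n$ between $\beta$ and a map $\phi := g : I^n \to \mgv$. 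The fact that $G$ is rel.\ $\p I^n$ gives $\phi|_{\p I^n} = \beta|_{\p I^n} = \alpha$, i.e.\ $\phi \circ i = \alpha$. The homotopy $G$ itself, viewed as a homotopy into $\vm$ via the continuous identity $\mgv \to \vm$, witnesses $\beta \simeq \id \circ \phi$ rel.\ $\p I^n$. The case $n = 0$ (surjectivity on $\pi_0$) is handled identically by \Cref{lifting} applied with $n = 0$, since $\p I^0 = \emptyset$ makes the hypothesis on $f|_{\p I^n}$ vacuous.

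By \Cref{weak-lemma} this is exactly what is needed to conclude that $\id : \mgv \to \vm$ is a weak homotopy equivalence. Composing with the homotopy equivalence $\id : \gv \to \mgv$ yields that $\id : \gv \to \vm$ is a weak homotopy equivalence, proving the theorem.

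There is essentially no new obstacle at this stage: all of the substantive work has been absorbed into \Cref{lifting} (which in turn rests on \Cref{injpart1}, \Cref{second}, and the Freudenthal--Kuhn triangulation machinery). The only thing to be careful about is the bookkeeping in \Cref{weak-lemma}, namely that the homotopy produced by \Cref{lifting} is genuinely rel.\ $\p I^n$ rather than merely a free homotopy, and that the $n=0$ case (which gives $\pi_0$-surjectivity) is covered by the same statement. Both are immediate from the formulation of \Cref{lifting}.
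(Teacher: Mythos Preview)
Your proposal is correct and follows essentially the same approach as the paper's proof, which simply says that combining \Cref{weak-lemma} with \Cref{lifting} shows $\id:\mgv\to\vm$ is a weak homotopy equivalence, and then invokes the homotopy equivalence $\id:\gv\to\mgv$. You have merely (and correctly) spelled out how the commutative square in \Cref{weak-lemma} is resolved via \Cref{lifting} and noted the $n=0$ case explicitly.
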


\begin{proof}
Combining \Cref{weak-lemma} and \Cref{lifting}, we see that $\id:\mgv\to\mcv^m(\scru)$ is a weak homotopy equivalence. Since $\id:|K|\to |K|_m$ is a homotopy equivalence for any simplicial complex $K$ \cite[Appendix 1, Theorem 10]{MS82}, in particular for the case $K=\vu$, the theorem follows.
\end{proof}

To strengthen the weak homotopy equivalence of \Cref{mainthm} to a homotopy equivalence, it suffices to show that $\vm$ has the homotopy type of a CW complex. We reiterate a question posed in \cite{Adams}.

\begin{question}
If $\scru$ is a uniformly bounded open cover of a metric space $X$, does $\vm$ have the homotopy type of a CW complex? More specifically, is $\vm$ an absolute neighborhood retract?
\end{question}

\section{The persistent homology of Vietoris metric thickenings}\label{persistence}

A \textit{persistence module} is a family of $\bbk$-vector spaces $\{V_r\}_{r\in\bbr}$, where $\bbk$ is any field, along with a collection of linear maps $\phi_{r,s}:V_r\to V_s$ for every choice of $r\leq s$ such that $\phi_{r,r}=\id$ and $\phi_{r,t}=\phi_{s,t}\circ\phi_{r,s}$ whenever $r\leq s\leq t$. More succinctly, a persistence module is a functor $(\bbr, \leq)\to \Vect_\bbk$ from the poset $(\bbr, \leq)$ to the category of $\bbk$-vector spaces. 

For a fixed metric space $X$, note that $|{\VR(X;\bullet)}|$ and $\VR^m(X;\bullet)$ define functors from $(\bbr, \leq)$ to $\Top$, the category of topological spaces. Let $H_n:\Top\to\Vect_\bbk$ denote the $n$-th singular homology functor with coefficients in $\bbk$. Then for any metric space $X$ and $n\in\bbn$, $H_n\circ |{\VR(X;\bullet)}|$ and $H_n\circ \VR^m(X;\bullet)$ are both examples of persistence modules.

Note that the following diagram, in which the horizontal arrows are the inclusions, commutes for any $r_1\leq r_2$.
$$\begin{tikzcd}
\vert{\VR(X;r_1)}\vert \arrow[d, "\id"'] \arrow[r, hook] & \vert{\VR(X;r_2)}\vert \arrow[d, "\id"] \\
\VR^m(X;r_1) \arrow[r, hook]                           & \VR^m(X;r_2)                        
\end{tikzcd}$$
Additionally, recall that a weak homotopy equivalence $X\to Y$ induces an isomorphism on homology groups for any choice of coefficients \cite[Proposition 4.21]{Hatcher}. Hence \Cref{mainthm} and the above observations imply that the functors $H_n\circ |{\VR(X;\bullet)}|$ and $H_n\circ \VR^m(X;\bullet)$ are naturally isomorphic. The same is true if we replace $\VR(X;\bullet)$ and $\VR^m(X;\bullet)$ with $\C(X;\bullet)$ and ${\C}{}^m(X;\bullet)$ respectively. Hence we have the following.

\stepcounter{recallthm}
\begin{recallcor}
For any metric space $X$, the persistence modules $H_n\circ |{\VR(X;\bullet)}|$ and $H_n\circ \VR^m(X;\bullet)$ (respectively, $H_n\circ |{\C(X;\bullet)}|$ and $H_n\circ {\C}{}^m(X;\bullet)$) are isomorphic.
\end{recallcor}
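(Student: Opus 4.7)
The plan is to combine Theorem \ref{mainthm} with the naturality of the identity maps $\id:|{\VR(X;r)}|\to\VR^m(X;r)$ to produce a natural isomorphism of functors $(\bbr,\leq)\to\Vect_\bbk$, which is precisely an isomorphism of persistence modules. The main content has already been established; what remains is essentially a functoriality check.

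First I would observe that for each fixed $r\in\bbr$, the Vietoris--Rips cover (respectively the \v{C}ech cover) is a uniformly bounded open cover of $X$, so Theorem \ref{mainthm} applies and the bijection $\id_r:|{\VR(X;r)}|\to\VR^m(X;r)$ is a weak homotopy equivalence. Applying $H_n$ (with coefficients in $\bbk$) and invoking the standard fact that weak homotopy equivalences induce isomorphisms on singular homology yields, for each $r$, an isomorphism $H_n(\id_r):H_n|{\VR(X;r)}|\to H_n\VR^m(X;r)$.

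Next I would verify naturality. For $r_1\leq r_2$, both $|{\VR(X;r_1)}|\hookrightarrow |{\VR(X;r_2)}|$ and $\VR^m(X;r_1)\hookrightarrow \VR^m(X;r_2)$ are induced by the inclusion of underlying simplex sets, and the identity bijections $\id_{r_1}$ and $\id_{r_2}$ commute with these inclusions on the level of underlying sets (this is exactly the commuting square displayed just before the corollary in the excerpt). Applying $H_n$ to this commuting square of continuous maps yields a commuting square in $\Vect_\bbk$, which is the defining compatibility of a morphism of persistence modules. Thus $\{H_n(\id_r)\}_{r\in\bbr}$ assembles into a natural transformation $H_n\circ|{\VR(X;\bullet)}|\Rightarrow H_n\circ\VR^m(X;\bullet)$ whose components are all isomorphisms, hence a natural isomorphism of persistence modules. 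The \v{C}ech case is identical, replacing the Vietoris--Rips cover by the cover by open balls of radius $r$, as described in \Cref{Vietoris-def}.

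There is no real obstacle here: the hard work is entirely contained in Theorem \ref{mainthm}, and the only thing to be careful about is that the squares commute on the nose (not merely up to homotopy), which they do because the horizontal maps are literal inclusions and the vertical maps are the set-theoretic identity. I would write the proof as a short paragraph invoking Theorem \ref{mainthm}, the Whitehead-style fact that weak equivalences induce homology isomorphisms \cite[Proposition 4.21]{Hatcher}, and the displayed commutative square already present in the excerpt.
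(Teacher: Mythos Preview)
Your proposal is correct and is essentially identical to the paper's own argument: both apply \Cref{mainthm} at each scale $r$, invoke \cite[Proposition 4.21]{Hatcher} to get homology isomorphisms, and use the on-the-nose commutativity of the displayed square to obtain naturality. There is nothing to add.
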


\section*{Acknowledgments}

The author would like to thank Vasileios Maroulas and Henry Adams for helpful discussion.

\end{document}